\newcommand{\mathlarger}{}
\numberwithin{equation}{subsection}
\theoremstyle{theorem}
	\newtheorem{theorem}[equation]{Theorem}
	\newtheorem{lemma}[equation]{Lemma}
	\newtheorem{corollary}[equation]{Corollary}
	\newtheorem{proposition}[equation]{Proposition}
	\newtheorem*{theorem*}{Theorem}
\theoremstyle{definition}
	\newtheorem{definition}[equation]{Definition}
	\newtheorem{example}[equation]{Example}
\theoremstyle{remark}
	\newtheorem*{remark*}{Remark}
\newcommand{\norm}[1]{\lVert#1\rVert}
\newcommand{\st}{\mid}
\newcommand{\eps}{\varepsilon}
\newcommand{\To}{\Rightarrow}
\newcommand{\R}{\mathbb{R}}
\newcommand{\Z}{\mathbb{Z}}
\newcommand{\F}{\mathbb{F}}
\newcommand{\D}{\mathfrak{D}}
\newcommand{\met}{\operatorname{\rm d}}
\newcommand{\meto}{\met^\omega}
\newcommand{\metO}{\met^\Omega}
\newcommand{\Inv}{\operatorname{Inv}}
\newcommand{\pre}[1]{{#1}^{-1}}
\newcommand{\haus}{d_{\mathrm{H}}}
\newcommand{\antihaus}{\ell_{\mathrm{H}}}
\newcommand{\Iota}{{\rm I}}
\newcommand{\Kappa}{{\rm K}}
\newcommand{\Mu}{{\rm M}}
\newcommand{\cat}[1]{\mathbf{#1}}
\newcommand{\vect}[1]{{\mathbf{#1}}}
\newcommand{\Vect}{\cat{Vect}}
\newcommand{\Subs}{\cat{Subsets}}
\newcommand{\Trans}{\cat{Trans}}
\newcommand{\intr}{\operatorname{int}}
\newcommand{\tensor}{\otimes}
\newcommand{\Id}{\Iota}
 \newenvironment{vlist}%
 { \begin{list}%
         {$\bullet$}%
         {\setlength{\labelwidth}{20pt}%
          \setlength{\leftmargin}{25pt}%
          \setlength{\topsep}{0pt}
          \setlength{\itemsep}{1.5ex}
          \setlength{\parsep}{0pt}}}%
 { \end{list} }
\begin{document}

\title{Metrics for generalized persistence modules}
\author{Peter Bubenik}
\address[Peter Bubenik]{Department of Mathematics, Cleveland State University}
\email{p.bubenik@csuohio.edu}

\author{Vin de Silva}
\address[Vin de Silva]{Department of Mathematics, Pomona College}
\email{Vin.deSilva@pomona.edu}

\author{Jonathan Scott}
\address[Jonathan Scott]{Department of Mathematics, Cleveland State University}
\email{j.a.scott3@csuohio.edu}
\date{\today}

\begin{abstract}
We consider the question of defining interleaving metrics on generalized persistence modules over arbitrary preordered sets. Our constructions are functorial, which implies a form of stability for these metrics. We describe a large class of examples, inverse-image persistence modules, which occur whenever a topological space is mapped to a metric space. Several standard theories of persistence and their stability can be described in this framework. This includes the classical case of sublevelset persistent homology.
We introduce a distinction between `soft' and `hard' stability theorems.
While our treatment is direct and elementary, the approach can be explained abstractly in terms of monoidal functors.
\end{abstract}

\maketitle


\section*{ Introduction}

Topological persistence, most commonly seen in the form of persistent homology, is perhaps the core technology at the heart of topological data analysis. Given a finite data set sampled from an object of interest, one seeks to recover topological invariants of the object by suitable computations on the sampled data.
The stumbling block is that the classical invariants of algebraic topology are extremely sensitive to local fluctuations in the data, but sampling irregularities are unavoidable when working with real scientific data.

Persistence solves this problem by replacing individual topological invariants with systems of invariants. Instead of calculating the homology of a Vietoris--Rips complex built from the data at one fixed scale, persistence calculates the homology at all scales simultaneously. Quantities that are unstable at a fixed scale are seen to be stable when viewed across the full range of scales.

Early versions of this idea are seen in the work of Frosini~\cite{Frosini_1991} on size functions, and later Robins~\cite{Robins_1999} in studying fractal attractors of dynamical systems.

The big computational breakthrough came when Edelsbrunner, Letscher and Zomorodian~\cite{Edelsbrunner_L_Z_2000} introduced an algorithm that takes an increasing nested sequence of simplicial complexes (built from the data at increasing values of a scale parameter) and returns a compact descriptor of its homology: the \emph{persistence diagram} or \emph{barcode}. In subsequent work, Zomorodian and Carlsson~
\cite{Zomorodian_Carlsson_2005} expanded the reach of the algorithm by interpreting it in terms of commutative algebra; and Cohen-Steiner, Edelsbrunner and Harer~\cite{CohenSteiner_E_H_2007} gave the first proof of the crucial result that this persistence diagram is numerically stable.

Since then, there have been several variations on the persistence theme. These include multidimensional persistence, for multiparameter nested families~\cite{carlssonZomorodian:multidimP,Lesnick_2011}; zigzag persistence, for families that are not nested \cite{Carlsson_deSilva_2010}; the closely related theories of extended persistence~\cite{CS_E_H_2008} and levelset persistence~\cite{Carlsson_dS_M_2009}; angular persistence for spaces equipped with a circle-valued map~\cite{Dey_Burghelea_2011}; image, kernel, and cokernel persistence~\cite{csehm:kernels}; and so on.

An important insight was the realization that persistence is not so much about topology as it is about algebra. This was first pointed out in~\cite{Zomorodian_Carlsson_2005}, and has been exploited and developed by several authors since then, most notably in~\cite{Chazal_CS_G_G_O_2009} and~\cite{Lesnick_2011}.

For instance, consider a nested family of simplicial complexes:
\[
\begin{diagram}
\node{X_0}
	\arrow{e}
\node{X_1}
	\arrow{e}
\node{\cdots}
	\arrow{e}
\node{X_{n-1}}
\end{diagram}
\tag{$*$}
\]
Each arrow represents the inclusion of one complex in the next. Now let $H$ denote the operation of taking homology in a given dimension, with coefficients in a field~$\F$. This leads to a diagram of {vector spaces} and {linear maps}:
\[
\begin{diagram}
\node{H(X_0)}
	\arrow{e}
\node{H(X_1)}
	\arrow{e}
\node{\cdots}
	\arrow{e}
\node{H(X_{n-1})}
\end{diagram}
\tag{$H*$}
\]
The key observation of~\cite{Zomorodian_Carlsson_2005} is that the persistence diagram\footnote{%
The word `diagram' is used in two senses here, for unavoidable historical reasons.
}
produced by~\cite{Edelsbrunner_L_Z_2000} depends only on the algebraic structure carried by this diagram: it is a description of its isomorphism type, within the category of all such diagrams of vector spaces.
In this way, the theory of persistence is emancipated from topology.

We wish to encourage a further emancipation. Diagram~($*$) produces diagram~($H*$) because the operation $H$ is a \emph{functor} in the sense of category theory: it turns simplicial complexes into vector spaces, {and} it converts simplicial maps into linear maps in a way that respects identity maps and composition.

Our purpose in this paper is to develop the categorical point of view, with a special emphasis on metrics and stability.
This continues the program started in~\cite{bubenikScott:1}.
We work with generalized persistence modules defined over an arbitrary poset or preordered set, taking values in an arbitrary category. Two such modules may be compared by finding an `interleaving' between them. The additional data of a `sublinear projection' or a `superlinear family' quantifies the comparison and allows us to define a metric on generalized persistence modules.
The classical stability theorem can be seen as splitting into a `soft' categorical statement and a `hard' invariant theory statement; we show that soft stability is a very widespread phenomenon for the metrics we have defined.

\begin{remark*}
We believe that the language of category theory is natural for studying persistence modules, and simple to use once it has become familiar.
To help ensure that this point of view is useful to the reader, rather than a new burden, we have made every effort to keep the discussion as concrete and explicit as possible.
\end{remark*}

\medskip
{\bf Outline.}
In Section~\ref{sec:gpm}, we define generalized persistence modules and give a preview of the main theorems.
Section~\ref{sec:interleaving} is about the theory of interleaving metrics on categories of persistence modules.
We describe a large class of examples in Section~\ref{sec:map-to-metric}, which includes many known theories of persistence.
Finally, in Section~\ref{sec:monoids} we increase the level of abstraction and explain how our results connect with the theory of adjoint functors on monoidal categories. This leads to some suggestions on how to handle multidimensional persistence.

\section{Persistence Modules as Functors}
\label{sec:gpm}

\subsection{Categories}
We recall the definitions, informally. For more details see a standard reference such as \cite{maclane:book}. For a longer discussion in the context of persistent homology, see~\cite{bubenikScott:1}.

\begin{vlist}
\item
A \emph{category} consists of a collection of objects and collections of {morphisms} (or `arrows') between each pair of objects. There is a composition operation on morphisms which is associative; and there is an identity morphism from each object to itself.
 
\item
A \emph{functor} between categories is a map which takes objects to objects and morphisms to morphisms and is compatible with the structure of a category.
\end{vlist}

For instance, simplicial homology~$H$ in a given dimension is a functor from the category $\cat{Simp}$, of simplicial complexes and simplicial maps, to the category $\Vect_\F$, of vector spaces and linear maps over~$\F$.
We can use $H$ to convert any diagram in $\cat{Simp}$ to a diagram in~$\Vect_\F$, as we saw with ($*$) and ($H*$) in the introduction.

There is one more category lurking in that scenario. Consider the category $\cat{n}$ whose objects are the set $\{0, 1, \dots, n-1\}$, and with a unique morphism $j \to k$ for each $j \leq k$ and no morphism if $j > k$. We can visualize this category using the following diagram:
\[
\begin{diagram}
\node{0}
	\arrow{e}
\node{1}
	\arrow{e}
\node{\cdots}
	\arrow{e}
\node{n-1}
\end{diagram}
\]
Only the `generating' morphisms $j \to j+1$ are drawn here. The identity morphisms are implicitly understood to be there, and other morphisms $j \to k$ can be formed as compositions of generating morphisms.

The diagram (*) above may be thought of as a functor $X : \cat{n} \to \cat{Simp}$. This functor takes each object~$j$ to the corresponding simplicial complex~$X_j$, and each morphism $j \to k$ to the inclusion map $X_j \to X_k$ (which exists because $X_j \subseteq X_k$ whenever $j \leq k$).

We call $\cat{n}$ the \emph{indexing category} for~($*$).

Going further, the diagram ($H*$) also has $\cat{n}$ as its indexing category, and may be thought of as the composite functor $HX$:
\[
\begin{diagram}
\node{\cat{n}}
	\arrow{e,t}{X}
\node{\cat{Simp}}
	\arrow{e,t}{H}
\node{\Vect_{\F}}
\end{diagram}
\]
Thus, beginning with a category $\cat{n}$ which specifies a particular `shape' of diagram, other diagrams of the same shape can be thought of as functors defined on~$\cat{n}$.

We can define maps between diagrams. Consider two vector-space valued diagrams
$V, W : \cat{n} \to \Vect_{\F}$. A map $\varphi: V \to W$ is defined to be a collection of linear maps $(\varphi_j : V_j \to W_j)$ such that all squares of the following diagram commute:
\[
\begin{diagram}
\dgARROWLENGTH=1.5em
\node{V_0}
	\arrow{e}
	\arrow{s,l}{\varphi_0}
\node{V_1}
	\arrow{e}
	\arrow{s,l}{\varphi_1}
\node{\dots}
	\arrow{e}
\node{V_{n-1}}
	\arrow{s,r}{\varphi_{n-1}}
\\
\node{W_0}
	\arrow{e}
\node{W_1}
	\arrow{e}
\node{\dots}
	\arrow{e}
\node{W_{n-1}}
\end{diagram}
\]
This too can be interpreted in category theory.
\begin{vlist}
\item
A \emph{natural transformation} $\varphi: F \To G$ between two functors $F,G: \cat{P} \to \cat{D}$ is a collection $(\varphi_x)$ of morphisms in~$\cat{D}$. For each object $x$ of~$\cat{P}$ we have $\varphi_x: F(x) \to G(x)$, and we require that the diagram
\begin{equation}
\begin{diagram}
\dgARROWLENGTH=1.5em
\node{F(x)}
	\arrow{e,t}{F(\alpha)}
	\arrow{s,l}{\varphi_x}
\node{F(y)}
	\arrow{s,r}{\varphi_y}
\\
\node{G(x)}
	\arrow{e,t}{G(\alpha)}
\node{G(y)}
\end{diagram}
\label{eq:natural}
\end{equation}
commute for every $\cat{P}$-morphism $\alpha: x \to y$.
\end{vlist}
Then, a map between two diagrams $F,G$ is precisely a natural transformation $F \To G$.

\begin{example}
\label{ex:natural}
Suppose we are given two nested families of simplicial complexes
\[
X_0 \subseteq X_1 \subseteq X_2 \subseteq X_3
\quad
\text{and}
\quad
Y_0 \subseteq Y_1 \subseteq Y_2 \subseteq Y_3.
\]
These can be thought of as functors $X, Y: \cat{4} \to \cat{Simp}$. Suppose $f_3: X_3 \to Y_3$ is a simplicial map which restricts to simplicial maps $f_i: X_i \to Y_i$ for all~$i$. Then $f = (f_i)$ is a natural transformation $X \To Y$.
The commutative diagram
\[
\begin{diagram}
\dgARROWLENGTH=3em
\node{X_0}
	\arrow{e}
	\arrow{s,l}{f_0}
\node{X_1}
	\arrow{e}
	\arrow{s,l}{f_1}
\node{X_2}
	\arrow{e}
	\arrow{s,l}{f_2}
\node{X_3}
	\arrow{s,l}{f_3}
\\
\node{Y_0}
	\arrow{e}
\node{Y_1}
	\arrow{e}
\node{Y_2}
	\arrow{e}
\node{Y_3}
\end{diagram}
\]
shows the maps and contains all the required commutative squares.
\end{example}

There are many kinds of composition involving natural transformations. Here are the ones we use.

\begin{vlist}
\item
If $\varphi: F \To G$ and $\psi: G \To H$ where $F,G,H$ are functors $\cat{P} \to \cat{D}$ then their `vertical composition' is the natural transformation $\psi \varphi: F \To H$ defined by $(\psi \varphi)_x = \psi_x \phi_x$.
\end{vlist}

A natural transformation $\varphi: F \To G$, where $F, G: \cat{P} \to \cat{D}$, may be `feathered' on either side by a functor. 

\begin{vlist}
\item
If $H: \cat{D} \to \cat{D}'$ then $H \varphi: HF \To HG$ is the natural transformation defined by $(H\varphi)_x = H(\varphi_x)$, the morphism resulting from applying the functor $H$ to the morphism $\varphi_x$.

\item
If $K: \cat{P}' \to \cat{P}$ then $\varphi K: FK \To GK$ is the natural transformation defined by $(\varphi K)_{x'} = \varphi_{K(x')}$.

\end{vlist}

%
We illustrate feathering using the natural transformation $f$ of Example~\ref{ex:natural}:

\begin{example}
Let $H: \cat{Simp} \to \cat{Vect}_\F$ be a homology functor. This leads to functors $HX, HY: \cat{4} \to \cat{Vect}_\F$. The commutative diagram
\[
\begin{diagram}
\dgARROWLENGTH=2.25em
\node{H(X_0)}
	\arrow{e}
	\arrow{s,l}{H(f_0)}
\node{H(X_1)}
	\arrow{e}
	\arrow{s,l}{H(f_1)}
\node{H(X_2)}
	\arrow{e}
	\arrow{s,l}{H(f_2)}
\node{H(X_3)}
	\arrow{s,l}{H(f_3)}
\\
\node{H(Y_0)}
	\arrow{e}
\node{H(Y_1)}
	\arrow{e}
\node{H(Y_2)}
	\arrow{e}
\node{H(Y_3)}
\end{diagram}
\]
represents the natural transformation $Hf: HX \To HY$.
\end{example}

\begin{example}
Pick any functor $K: \cat{2} \to \cat{4}$; for example, the index-doubling functor defined by setting $K(0) = 0$ and $K(1) = 2$. This leads to functors $XK, YK: \cat{2} \to \cat{Simp}$.
The commutative diagram
\[
\begin{diagram}
\dgARROWLENGTH=3em
\node{X_0}
	\arrow[2]{e}
	\arrow{s,l}{f_0}
\node[2]{X_2}
	\arrow{s,l}{f_2}
\\
\node{Y_0}
	\arrow[2]{e}
\node[2]{Y_2}
\end{diagram}
\]
represents the natural transformation $fK: XK \To YK$.
\end{example}

The name `feathering' is suggested by the look of the relevant parts of this diagram:
\[
\xymatrix{
\cat{P}' 
	\ar[r]^K
&
\cat{P}
	\ar@/^1pc/[r]^F
	\ar@/_1pc/[r]_G
&
\cat{D}
	\ar[r]^H
&
\cat{D}'
}
\]
Double-feathering $H \varphi K: HFK \To HGK$ may be defined similarly. 
We use parentheses to disambiguate more complicated composites of natural transformations and functors. The relations $H(\psi \varphi) = (H \psi) (H \varphi)$ and $(\psi \varphi)K = (\psi K) (\varphi K)$ are used several times.

\subsection{Generalized persistence modules}

The term `persistence module' was introduced by Zomorodian and Carlsson in~\cite{Zomorodian_Carlsson_2005}, to mean a collection of vector spaces $V_i$ and linear maps $V_i \to V_{i+1}$, where the indices are the natural numbers. This data can be represented as graded module over the polynomial ring $\F[t]$, which enables the use of classical module theory to define the persistence diagram and construct it algorithmically.
Subsequently, Chazal et~al.~\cite{Chazal_CS_G_G_O_2009} considered persistence modules indexed over the real numbers, using analytic arguments to construct the persistence diagram and establish its stability.

These persistence modules are exactly the same thing as functors $\cat{N} \to \Vect_\F$ or $\cat{R} \to \Vect_\F$. Here $\cat{N}$  (respectively~$\cat{R})$ is the category whose objects are the natural numbers (respectively the real numbers) and with a unique morphism $s \to t$ whenever $s \leq t$.

For the present paper\footnote{%
In the earlier work~\cite{bubenikScott:1} the target category is arbitrary and the indexing category is~$\cat{R}$.
}
we generalize the notion of persistence module as follows:

\begin{vlist}
\item
The indexing category ($\cat{n}$, $\cat{N}$ or $\cat{R}$ above) may be any preordered set~$\cat{P}$ (Section~\ref{subsec:proset}).

\item
The target category ($\Vect_\F$ above) may be any category~$\cat{D}$.

\item
Functors $\cat{P} \to \cat{D}$ are called \emph{generalized persistence modules, in~$\cat{D}$, over~$\cat{P}$}.
\end{vlist}

The collection of functors $\cat{P} \to \cat{D}$ is itself a category, its morphisms being the natural transformations between functors. 
This category is called $\cat{D}^{\cat{P}}$.
(This is analogous to writing $Y^X$ for the set of functions $X \to Y$.)

The word `module' has a strong association with abelian categories~\cite{maclane:book,bubenikScott:1} but in this paper we use the word in a loose sense.

The value of working in this level of generality may be seen in the following examples.

\begin{example}[the sublevelset module]
\label{ex:gpm-1}
Let $f: X \to \R$ be a function on a topological space. This defines a functor $F \in \cat{Top}^\cat{R}$ where $\cat{Top}$ is the category of topological spaces. Specifically, for every real number~$t$ we define $F(t)$ to be the subspace $\pre{f}(-\infty,t] \subseteq X$ and for every pair $s \leq t$ the morphism $F(s) \to F(t)$ is defined to be the inclusion map.
\end{example}

From the sublevelset module we reach other well-known constructions.

\begin{example}
\label{ex:gpm-2}
The sublevelset persistent homology of~$f$ is obtained as a composite
\[
\begin{diagram}
\node{\cat{R}}
	\arrow{e,t}{F}
\node{\cat{Top}}
	\arrow{e,t}{H}
\node{\Vect_\F}
\end{diagram}
\]
where $H$ is homology with coefficients in the field~$\F$.
\end{example}

\begin{example}
\label{ex:gpm-3}
The merge tree of~$f$ is obtained as a composite
\[
\begin{diagram}
\node{\cat{R}}
	\arrow{e,t}{F}
\node{\cat{Top}}
	\arrow{e,t}{\pi_0}
\node{\cat{Set}}
\end{diagram}
\]
where $\pi_0$ returns the set of connected components of a topological space. Explicitly: for each $t \in \R$ the composite functor returns the set of connected components of $\pre{f}(-\infty,t]$; and for $s \leq t$ it returns the map by which the components of $\pre{f}(-\infty,s]$ include into the components of $\pre{f}(-\infty,t]$.
One can use this information to build a topological graph that corresponds to the traditional notion of merge tree, of the kind studied by Morozov et~al.~\cite{Morozov_B_W_2013}.
\end{example}

Using feathering, the stability of Examples \ref{ex:gpm-2} and~\ref{ex:gpm-3} follows automatically from the stability of Example~\ref{ex:gpm-1}. We will discuss this in detail later on.

\subsection{Stability}
\label{subsec:stability}

The stability theorem of Cohen-Steiner, Edelsbrunner and Harer~\cite{CohenSteiner_E_H_2007} is of such central importance that any worthwhile theoretical language for persistence ought to be able to describe it.

Our approach requires additional structure on the indexing category, which enables us to define an \emph{interleaving distance} between generalized persistence modules.
We have two proposals for what this additional structure should be, both of which have the desired effect:

\begin{vlist}
\item
$\cat{P}$ is a preordered set equipped with a \emph{sublinear projection} on translations.

\item
$\cat{P}$ is a preordered set equipped with a \emph{superlinear family} of translations.
\end{vlist}

We will define these terms in Sections \ref{sec:sublinear} and~\ref{sec:superlinear}, respectively; and in Section~\ref{sec:monoids} we reinterpret them in terms of monoidal categories.
Most commonly we use option three:

\begin{vlist}
\item
$\cat{P}$ is a preordered set with a metric or Lawvere metric.
\end{vlist}

In Section~\ref{sec:lawvere}, we explain how to obtain a sublinear
projection from such a metric.

Here are the main results.
The stability and the inverse-image stability theorems are reformulations of results of~\cite{CohenSteiner_E_H_2007,Chazal_CS_G_G_O_2009} from classical sublevelset persistence, in our more general context but using essentially the same arguments. 

\begin{theorem*}[interleaving distance, \ref{thm:inter-sub} and~\ref{thm:inter-super}]
\label{int-thm:inter}
Let $\cat{P}$ be a preordered set and let $\cat{D}$ be an arbitrary category. Suppose $\cat{P}$ is equipped with a sublinear projection or superlinear family. The extra structure induces an distance function $\met(F,G)$ between functors $F, G: \cat{P} \to \cat{D}$. This \emph{`interleaving distance'} is an extended pseudometric on~$\cat{D}^{\cat{P}}$.
\end{theorem*}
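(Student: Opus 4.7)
I would define
\[
\met(F,G) = \inf\{\epsilon \geq 0 \mid \text{there is an $\epsilon$-interleaving between $F$ and $G$}\},
\]
with the usual convention $\inf\emptyset = +\infty$, and then verify the three axioms of an extended pseudometric. Both notions of extra structure on $\cat{P}$ package an $\epsilon$-interleaving as the same formal data—translations $\Gamma,\Kappa:\cat{P}\to\cat{P}$ of ``size'' at most $\epsilon$ together with natural transformations $\varphi:F\To G\Gamma$ and $\psi:G\To F\Kappa$ satisfying two compatibility squares—so the arguments can be written uniformly, with only the size bookkeeping depending on the case.

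\textbf{Reflexivity and symmetry.} For reflexivity I take $\Gamma=\Kappa=\Id_{\cat{P}}$, which has size $0$ under either structure, and $\varphi=\psi=\Id_F$; the compatibility squares reduce to identities, giving $\met(F,F)=0$. Symmetry is immediate because the definition of an $\epsilon$-interleaving is invariant under the relabelling $(F,\varphi,\Gamma)\leftrightarrow(G,\psi,\Kappa)$, so any $\epsilon$-interleaving between $F$ and $G$ is also an $\epsilon$-interleaving between $G$ and $F$.

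\textbf{Triangle inequality.} Given an $\epsilon$-interleaving $(\varphi,\psi,\Gamma,\Kappa)$ between $F$ and $G$ and a $\delta$-interleaving $(\varphi',\psi',\Gamma',\Kappa')$ between $G$ and $H$, I would build an $(\epsilon+\delta)$-interleaving between $F$ and $H$ by feathering:
\[
F\xto{\varphi}G\Gamma\xto{\varphi'\Gamma}H\Gamma'\Gamma,\qquad H\xto{\psi'}G\Kappa'\xto{\psi\Kappa'}F\Kappa\Kappa'.
\]
The two compatibility squares required of this composite expand, using the identities $H(\psi\varphi)=(H\psi)(H\varphi)$ and $(\psi\varphi)K=(\psi K)(\varphi K)$ recalled at the end of the previous subsection, into pastings whose outer boundaries are exactly the compatibility squares of the two given interleavings, so commutativity is automatic. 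To bound the size of the composite translations I use subadditivity of the sublinear projection in the first case, and the defining relation $\Gamma'_\delta\Gamma_\epsilon\leq\Gamma_{\epsilon+\delta}$ of the superlinear family in the second case; in the latter, the composite interleaving is transported along the natural transformation induced by this inequality. Taking infima over all such $\epsilon,\delta$ yields $\met(F,H)\leq\met(F,G)+\met(G,H)$, with the extended values absorbed by the convention on $\inf\emptyset$.

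\textbf{Main obstacle.} The technical heart is the triangle inequality, and within it the verification that the pasted natural transformations really satisfy both compatibility squares. In the sublinear case the subadditivity of the projection handles the size bound directly, so only the feathering identities are needed. In the superlinear case there is the further step of transporting the interleaving data along the family inequality $\Gamma'_\delta\Gamma_\epsilon\leq\Gamma_{\epsilon+\delta}$, and one must check that this transport preserves the compatibility conditions—again by a naturality argument. Everything else, including the extension to $\infty$ values, is then formal.
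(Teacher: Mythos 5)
Your proposal is correct and follows essentially the same route as the paper: define the distance as the infimum of $\eps$ for which an $\eps$-interleaving exists, verify symmetry and reflexivity directly, and establish the triangle inequality by composing the two given interleavings via feathering (the paper isolates this composition step as Proposition~\ref{prop:triangle}, then invokes sublinearity or superlinearity together with monotonicity, Proposition~\ref{prop:monotonicity}, exactly as you describe).  The only slight imprecision is the remark that $\Id$ ``has size $0$ under either structure''---in the superlinear case there is no independent size function on translations, and the actual argument is that $F$ is $\Iota$-interleaved with itself, hence $\Omega_0$-interleaved by monotonicity since $\Iota\leq\Omega_0$---but this is a cosmetic point and the rest of the argument is sound.
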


\begin{theorem*}[stability, \ref{thm:sub-stability} and~\ref{thm:super-stability}]
\label{int-thm:stab}
Let $\cat{P}$ be a preordered set with a sublinear projection or superlinear family. Let $H: \cat{D} \to \cat{E}$ be a functor between arbitrary categories $\cat{D}, \cat{E}$. Then for any two functors $F, G: \cat{P} \to \cat{D}$ we have
\[
\met(HF,HG) \leq \met(F,G).
\]
That is, the map $H^\cat{P}: \cat{D}^{\cat{P}} \to \cat{E}^{\cat{P}}$, defined by post-composing with~$H$, is $1$-Lipschitz.
\end{theorem*}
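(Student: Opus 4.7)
The plan is to show that any interleaving between $F$ and $G$ gives rise, via feathering by $H$, to an interleaving between $HF$ and $HG$ with the same parameters, so that the infimum defining $\met(HF,HG)$ is over a set at least as large as the infimum defining $\met(F,G)$.

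First I would unpack the definition of the interleaving distance on $\cat{D}^{\cat{P}}$: an interleaving with ``magnitude'' controlled by some translation (or pair of translations) $\Omega$ coming from the sublinear projection or superlinear family consists of natural transformations $\varphi: F \To G\Omega$ and $\psi: G \To F\Omega$ (or the analogous pair in the superlinear setting), satisfying compatibility conditions expressing that the two composites $(\psi\Omega)\varphi$ and $(\varphi\Omega)\psi$ equal the natural transformations induced by the structure maps of $F$ and $G$ along the canonical morphism from the identity translation to $\Omega\Omega$. The infimum of the magnitudes over all such interleavings is $\met(F,G)$.

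Next I would apply the functor $H$ by feathering. Given an interleaving $(\varphi,\psi)$ as above, form $H\varphi: HF \To H(G\Omega) = (HG)\Omega$ and $H\psi: HG \To H(F\Omega) = (HF)\Omega$. The equality $H(G\Omega) = (HG)\Omega$ is immediate from how feathering is defined objectwise. The key calculation is then that the feathered pair $(H\varphi, H\psi)$ still satisfies the interleaving compatibility conditions. This follows from the two identities emphasized at the end of the categorical preliminaries, namely $H(\psi\varphi) = (H\psi)(H\varphi)$ and $(\psi\varphi)K = (\psi K)(\varphi K)$, applied to rewrite $(H\psi)\Omega\circ H\varphi = H(\psi\Omega)\circ H\varphi = H((\psi\Omega)\varphi)$, and similarly for the other composite. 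Because $H$ carries the structure natural transformations of $F$ (respectively $G$) to those of $HF$ (respectively $HG$), the interleaving equations for $(\varphi,\psi)$ are sent to the interleaving equations for $(H\varphi,H\psi)$.

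Since the feathered interleaving has the same parameter, every interleaving realizing magnitude $\varepsilon$ between $F$ and $G$ produces an interleaving of magnitude $\varepsilon$ between $HF$ and $HG$. Taking infima yields $\met(HF,HG) \leq \met(F,G)$, which is exactly the 1-Lipschitz property of $H^{\cat{P}}$. The main obstacle, such as it is, is purely bookkeeping: being careful that feathering on the indexing side commutes strictly with application of $H$ on the target side, and that the two interleaving formalisms (sublinear projection and superlinear family) are handled uniformly. No hypothesis on $H$ beyond functoriality is needed, which is precisely why this is a ``soft'' stability result in the sense advertised in the introduction.
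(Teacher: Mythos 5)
Your proof is correct and follows essentially the same route as the paper: the paper isolates the core observation (that an interleaving $(\varphi,\psi)$ of $F,G$ yields an interleaving $(H\varphi,H\psi)$ of $HF,HG$ with the same translations, via $H((\psi\Gamma)\varphi)=((H\psi)\Gamma)(H\varphi)$ and $H(F\eta)=( HF)\eta$) as a standalone functoriality proposition and then quotes it in the stability theorem, whereas you inline the same calculation. The only cosmetic slip is writing a single $\Omega$ where the sublinear-projection case more generally allows a pair $(\Gamma,\Kappa)$ of possibly distinct $\eps$-translations, but the argument is unaffected.
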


The \emph{inverse-image construction} in Section~\ref{sec:map-to-metric} is a very common way to construct persistence modules. Let $\cat{P}$ be a poset of subsets of a metric space~$Y$. Let $X$ be a topological space equipped with a function $f: X \to Y$ (which need not be continuous). Then the functor $F$ defined from~$f$ by
\[
\begin{diagram}
\node{\cat{P}}
	\arrow{e,t}{F}
\node{\cat{Top}}
\\
\node{A}
	\arrow{e,T}
\node{\pre{f}(A)}
\end{diagram}
\]
is a persistence module in the category of topological spaces.

There is a natural way to define a Lawvere metric on~$\cat{P}$. If $\cat{P}$ satisfies a certain closure condition then the following theorem holds:

\begin{theorem*}[inverse-image stability, \ref{thm:inv-stab}]
\label{int-thm:inv-stab}
Let $X, Y$ and $\cat{P}$ be as above.
Let $f, g: X \to Y$ be functions and let $F, G \in \cat{Top}^\cat{P}$ be the corresponding persistence modules. Then
\[
\met(F,G) \leq
\met_\infty(f,g) := \sup_{x \in X} d_Y(f(x), g(x))
\]
provided that $\cat{P}$ `has enough translations' (Definition~\ref{def:enough}).
\end{theorem*}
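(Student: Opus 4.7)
The plan is to construct, for every $\varepsilon > \met_\infty(f,g)$, an $\Omega$-interleaving between $F$ and $G$ where $\Omega$ is a translation on $\cat{P}$ of size at most $\varepsilon$ in the Lawvere metric, and then take the infimum over such $\varepsilon$ to conclude $\met(F,G) \leq \met_\infty(f,g)$.

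The only input from the metric on $Y$ is a one-line triangle-inequality observation. Fix $\varepsilon > \met_\infty(f,g)$, so that $d_Y(f(x),g(x)) < \varepsilon$ for every $x \in X$. For any $A \in \cat{P}$ and any $x$ with $f(x) \in A$, the point $g(x)$ lies in the open $\varepsilon$-thickening $A^\varepsilon$ of $A$ in $Y$. Hence
\[
f^{-1}(A) \subseteq g^{-1}(A^\varepsilon)
\qquad\text{and symmetrically}\qquad
g^{-1}(A) \subseteq f^{-1}(A^\varepsilon).
\]
The Lawvere metric on $\cat{P}$ was set up in Section~\ref{sec:lawvere} so that the thickening $A^\varepsilon$ measures distance at most $\varepsilon$. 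The hypothesis that $\cat{P}$ has enough translations (Definition~\ref{def:enough}) is precisely what is needed to convert this set-theoretic thickening into a single translation $\Omega$ on $\cat{P}$, of size at most $\varepsilon$, with $A^\varepsilon \subseteq \Omega(A)$ for every $A \in \cat{P}$. Combined with the preceding inclusions this gives $F(A) \subseteq G(\Omega A)$ and $G(A) \subseteq F(\Omega A)$.

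These inclusions are the components of natural transformations $\varphi : F \To G\Omega$ and $\psi : G \To F\Omega$ which together constitute an $\Omega$-interleaving. Naturality and the two interleaving identities are automatic: morphisms in $\cat{P}$ are inclusions of subsets of $Y$, and preimages preserve inclusions, so every square and triangle in the relevant diagrams is a commutative square of subspace inclusions in $X$. This exhibits an $\varepsilon$-interleaving between $F$ and $G$, so $\met(F,G) \leq \varepsilon$, and letting $\varepsilon \downarrow \met_\infty(f,g)$ finishes the argument.

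The substance of the proof lies entirely in the appeal to Definition~\ref{def:enough}: without a closure hypothesis of that kind on $\cat{P}$ there is no guarantee that the metric thickening $A \mapsto A^\varepsilon$, which a priori lives in the larger power set of $Y$, can be realized inside $\cat{P}$ by a translation of the correct Lawvere size. Once that step is granted, the remainder of the proof is formal, and in particular the expected diagram chase is trivial because all arrows in sight are honest inclusions of subsets.
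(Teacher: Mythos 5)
The overall strategy is the same as the paper's, and your ``everything in sight is an inclusion, so all diagrams commute'' observation is the same as the paper's appeal to Lemma~\ref{lem:poset} and Proposition~\ref{prop:functorial}. However, your appeal to Definition~\ref{def:enough} misstates what that hypothesis actually grants. You claim that ``enough translations'' yields a single $\Omega$ with $\omega_\Omega \leq \varepsilon$ and $A^\varepsilon \subseteq \Omega(A)$ for every $A$. That is true when $\cat{P}$ happens to be closed under $\varepsilon$-offsets, but the definition was deliberately weakened in two ways you have smoothed over: (i) it only controls the \emph{weak} offset $\hat{A}^{\varepsilon'} = \{\,m \mid \exists\, a \in A,\ d_M(a,m) \leq \varepsilon'\,\}$, which may be a proper subset of $A^{\varepsilon'}$; and (ii) it is a two-parameter family $\Gamma_{\varepsilon',\eta}$ with a strict inequality $\varepsilon' < \eta$: one is only promised $\hat{A}^{\varepsilon'} \subseteq \Gamma(A)$ with $\omega_\Gamma \leq \eta$, never $\hat{A}^{\eta} \subseteq \Gamma(A)$ with $\omega_\Gamma \leq \eta$. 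Both weakenings are the whole point: for $\cat{P}$ the open subsets of a metric space one can take $\Gamma_{\varepsilon',\eta}(A) = \intr(A^\eta)$, which requires $\eta > \varepsilon'$ and only contains the weak $\varepsilon'$-offset; no translation of Lawvere size $\leq \varepsilon'$ containing $A^{\varepsilon'}$ need exist.

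The repair is small and brings you in line with the paper's proof: set $\delta = \met_\infty(f,g)$, fix any $\eta > \delta$, and take $\Gamma = \Gamma_{\delta,\eta}$ from Definition~\ref{def:enough}. If $f(x) \in A$ then $d_Y(f(x),g(x)) \leq \delta$, so $g(x) \in \hat{A}^\delta \subseteq \Gamma(A)$, and symmetrically; this is a weak-offset inclusion at the \emph{smaller} parameter $\delta$, which is exactly what the definition controls. The resulting inclusions $\pre{f}(A) \subseteq \pre{g}\Gamma(A)$ and $\pre{g}(A) \subseteq \pre{f}\Gamma(A)$ give a $\Gamma$-interleaving of the $\Subs_X$-valued diagrams (Lemma~\ref{lem:poset}), hence of $F,G$ in $\cat{Top}^\cat{P}$ by functoriality (Proposition~\ref{prop:functorial}), with $\omega_\Gamma \leq \eta$. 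Letting $\eta \downarrow \delta$ gives $\met(F,G) \leq \delta = \met_\infty(f,g)$. So the conclusion and the shape of the argument are right, but as written the proof asserts more of ``enough translations'' than the hypothesis actually provides.
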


By way of illustration, here is the most familiar instance~\cite{CohenSteiner_E_H_2007,Chazal_CS_G_G_O_2009} of these results.

\begin{example}
Let $f, g: X \to \R$ be real-valued functions on a topological space $X$. Then, for any homology functor~$H$, the sublevelset persistent homology modules
\begin{align*}
HF &= 
\left( H( \pre{f}(-\infty, t]) \mid {t \in \R} \right)
\\
HG &= 
\left( H( \pre{g}(-\infty, t]) \mid {t \in \R} \right)
\end{align*}
have interleaving distance bounded as follows:
\[
\met(HF, HG) \leq  \norm{f -g}_\infty.
\]
\end{example}

The interleaving bound follows from the theorems stated above. 
A direct proof is not difficult to find, and indeed the theorems themselves have straightforward proofs.
The real value of these theorems is that they isolate the formal aspects of stability, and suggest what generalizations and adjustments are easily available.

%
In many instances, topological data analysis can be summarized by this workflow:
\[
\begin{diagram}
\small
\node{\boxed{\parbox{2em}{data}}}
	\arrow{e,t}{1}
\node{\boxed{\parbox{3.7em}{filtered\\complex}}}
	\arrow{e,t}{2}
\node{\boxed{\parbox{4.8em}{persistence\\module}}}
	\arrow{e,t}{3}
\node{\boxed{\parbox{3.5em}{barcode}}}
\end{diagram}
\]
%
Arrow~1 may be something like the Vietoris--Rips or \v{C}ech construction.
Arrows 2 and~3 are combined in the classical algorithm of~\cite{Edelsbrunner_L_Z_2000}.
The persistence stability theorem~\cite{CohenSteiner_E_H_2007} may be thought of as the assertion that this entire process is $1$-Lipschitz with respect to suitable metrics on data and on barcodes.

Of course, one can attempt this arrow-by-arrow. The algebraic stability theorem of~\cite{Chazal_CS_G_G_O_2009} asserts that arrow~3 by itself is 1-Lipschitz. In applications, it remains to verify that arrows 1 and~2 combine to give a 1-Lipschitz operation. In many situations this is easily done.

For generalized persistence modules, the workflow may look like this:
\[
\begin{diagram}
\small
\node{\boxed{\parbox{2em}{data}}}
	\arrow{e,t}{F}
\node{\boxed{\parbox{4.8em}{generalized\\persistence\\module}}}
	\arrow{e,t}{H}
\node{\boxed{\parbox{4.8em}{generalized\\persistence\\module}}}
	\arrow{e,t}{J}
\node{\boxed{\parbox{3.9em}{discrete\\invariant}}}
\end{diagram}
\]
Here $H$ denotes composition with a functor, as in Examples \ref{ex:gpm-2} and~\ref{ex:gpm-3}. Operations $F,J$ are not expected to be categorical.
We propose the following language:
\begin{vlist}
\item
\emph{Soft stability theorems} assert that $HF$ is Lipschitz (or at least uniformly continuous).

\item
\emph{Hard stability theorems} assert that $J$ is Lipschitz (or at least uniformly continuous).
\end{vlist}

Hard stability theorems (such as the algebraic stability theorem) are of enormous importance, but in this paper we focus on soft stability theorems and the high-level arguments that produce them.

\section{Interleaving metrics.}
\label{sec:interleaving}

We wish to develop proximity relationships between persistence modules. When are two persistence modules close to each other?

The simplest relationship is isomorphism.
Two persistence modules $F, G: \cat{P} \to \cat{D}$ are isomorphic if  there are natural transformations $\varphi: F \To G$ and $\psi: G \To F$ such that $\psi \varphi$ and $\varphi \psi$ equal the identity transformations on $F$ and~$G$, respectively.

For a metric between persistence modules, we need a notion of approximate isomorphism. The relevant notion is interleaving. In sections \ref{subsec:proset}--\ref{subsec:translations} we develop this idea qualitatively. In sections \ref{sec:sublinear}--\ref{sec:superlinear} we explain what additional data is required to define a metric: sublinear projections and superlinear families.

Our ideas are motivated by the special cases $\cat{P} = (\R, \leq)$ and $(\R^n, \leq)$ and the notion of interleaving articulated by~\cite{Chazal_CS_G_G_O_2009,Lesnick_2011} and others.

\subsection{Preordered sets}
\label{subsec:proset}

We will develop a theory of interleaving for persistence modules indexed by a poset. We continue to use language and concepts from category theory. In this realm it is more natural to work with \emph{preordered sets}.

A {preordered set} is a pair~$(P, \leq)$ where $P$ is a set and $\leq$ is a reflexive, transitive relation on~$P$. The collection of preordered sets forms a category $\cat{Proset}$. A morphism in $\cat{Proset}$ is a function $f: P \to Q$ that is \emph{monotone}, meaning that $p_1 \leq p_2$ implies $f(p_1) \leq f(p_2)$.

We can think of a preordered set as a category~$\cat{P}$ with an object for every element of~$P$ and a unique morphism $p_1 \to p_2$ whenever $p_1 \leq p_2$. In the other direction, a category which is \emph{small} and \emph{thin} may be interpreted as a preordered set;  `small' means that its objects form a set, and `thin' means that there is at most one morphism $p_1 \to p_2$ for any objects $p_1, p_2$.

A functor $\cat{P} \to \cat{Q}$ between two such categories  is uniquely specified by a monotone map $P \to Q$. In this way $\cat{Proset}$ may be interpreted as the category of small thin categories.

\begin{remark*}
A preordered set is a poset if $p_1 \leq p_2$ and $p_2 \leq p_1$ implies $p_1 = p_2$. Whether or not it is a poset, its equivalence classes under the relation
\[
p_1 \equiv p_2
\quad\Leftrightarrow\quad
p_1 \leq p_2
\;\;\text{and}\;\;
p_2 \leq p_1
\]
do form a poset.
In categorical language, a small thin category is a poset category if its only isomorphisms are the identity maps. It is not in the spirit of category theory to enforce this condition: a preordered set and its poset of equivalence classes give rise to categories $\cat{P}, \cat{\hat{P}}$ which are equivalent, and category theory does not seek to distinguish equivalent categories.
We note, for example, that the categories of persistence modules $\cat{D}^{\cat{P}}, \cat{D}^{\cat{\hat{P}}}$ are naturally isomorphic when $\cat{P}, \cat{\hat{P}}$ are equivalent; so it doesn't matter which one we work with.
\end{remark*}

The following lemma saves a lot of effort.

\begin{lemma}[The Thin Lemma]
\label{lem:thin}
In a thin category all diagrams commute, since there is at most one morphism between any two objects. One never needs to `check' that a diagram commutes; only that the arrows in the diagram exist in the first place.
\qed
\end{lemma}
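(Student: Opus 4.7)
The plan is to unwind the definitions. A diagram in a category $\cat{C}$ is a functor from some small indexing category $\cat{J}$ to $\cat{C}$; saying it ``commutes'' means that whenever two sequences of composable arrows in the diagram share a common source $X$ and common target $Y$, their composites in $\cat{C}$ are equal as morphisms $X \to Y$. This is the only content of commutativity.

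So the argument is essentially one line. Suppose $\cat{C}$ is thin and consider any diagram in $\cat{C}$. Pick any two composable chains of arrows in the diagram with common source $X$ and common target $Y$. Each chain composes (in $\cat{C}$) to some morphism $X \to Y$. By thinness, the hom-set $\cat{C}(X,Y)$ contains at most one element, so these two composites are automatically equal. Hence every commutativity condition imposed by the shape of the diagram is vacuously satisfied. The only thing one still has to verify, in order to write down a diagram in $\cat{C}$ at all, is that each arrow appearing in it actually exists in $\cat{C}$, i.e.\ that the relevant hom-set is nonempty rather than empty.

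There is no real obstacle here; the lemma is a direct consequence of the definitions, which is precisely why it is useful as a labor-saving device later in the paper. The proof is short enough that one could legitimately end it with \qed immediately after the observation about hom-sets having at most one element, as the authors have done.
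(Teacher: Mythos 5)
Your proof is correct and follows exactly the same reasoning the paper relies on: the lemma is stated with an immediate \qed because the entire argument is the parenthetical "since there is at most one morphism between any two objects," which is precisely the hom-set observation you spell out. You have simply made explicit what the authors left implicit, so there is nothing to compare beyond level of detail.
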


Here is a sample use of the Thin Lemma.

\begin{corollary}
\label{cor:thin}
Let $F, G: \cat{C} \to \cat{Q}$ be functors into a thin category~$\cat{Q}$ (regarded as a preordered set). Then there is a natural transformation $\varphi: F \To G$ if and only if $F(x) \leq G(x)$ for all $x \in \cat{C}$. When it exists, the natural transformation is unique;
so $\cat{Q}^\cat{C}$ is thin.
\end{corollary}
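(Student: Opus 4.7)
The plan is to handle both directions of the biconditional by invoking the Thin Lemma at every step where something would normally need to be checked, and then conclude thinness of the functor category almost for free.

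For the forward direction, suppose $\varphi \colon F \To G$ is a natural transformation. Then for each object $x$ of $\cat{C}$, the component $\varphi_x$ is a morphism $F(x) \to G(x)$ in $\cat{Q}$. Interpreting $\cat{Q}$ as a preordered set, the existence of any morphism $F(x) \to G(x)$ is precisely the statement $F(x) \leq G(x)$, giving the desired conclusion.

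For the reverse direction, suppose $F(x) \leq G(x)$ for every $x$. Then for each $x$ there exists a (unique) morphism in $\cat{Q}$ from $F(x)$ to $G(x)$, and I define $\varphi_x$ to be this morphism. To show $\varphi = (\varphi_x)$ is natural, I need the square~\eqref{eq:natural} to commute for every $\cat{C}$-morphism $\alpha \colon x \to y$. But both $\varphi_y \circ F(\alpha)$ and $G(\alpha) \circ \varphi_x$ are morphisms $F(x) \to G(y)$ in the thin category $\cat{Q}$, so the Thin Lemma~\ref{lem:thin} applies and the square commutes automatically — in fact the only substantive content is that the arrow $F(x) \to G(y)$ exists at all, which follows by composing $\varphi_x$ with $G(\alpha)$ (or equivalently $F(\alpha)$ with $\varphi_y$).

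Uniqueness of $\varphi$ is immediate: each component $\varphi_x$ is a morphism in the thin category $\cat{Q}$ between specified objects, hence uniquely determined. Since any two natural transformations $F \To G$ must agree component-wise, there is at most one such transformation, so $\cat{Q}^\cat{C}$ is thin. I do not expect any genuine obstacle here; the whole argument is a direct unpacking of definitions with the Thin Lemma doing all the work of collapsing existence, uniqueness, and commutativity into the single condition $F(x) \leq G(x)$.
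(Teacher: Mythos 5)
Your proof is correct and follows exactly the paper's approach: existence and uniqueness of each component $\varphi_x$ is equivalent to $F(x)\leq G(x)$, and the Thin Lemma handles the naturality squares automatically. The paper's proof is simply a more compressed version of the same argument.
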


\begin{proof}
Each map $\varphi_x$ of the natural transformation exists, and exists uniquely, if and only if the corresponding relation $F(x) \leq G(x)$ holds. Then the relations~\eqref{eq:natural} automatically hold by virtue of the Thin Lemma.
\end{proof}

\subsection{Translations and interleavings}
\label{subsec:translations}

A \emph{translation} on a preordered set $(P, \leq)$ is a function $\Gamma: P \to P$ which is monotone and which satisfies $x \leq \Gamma(x)$ for all $x \in P$.

In categorical language, using Corollary~\ref{cor:thin}, a translation is a functor $\Gamma: \cat{P} \to \cat{P}$ for which there exists a natural transformation $\eta_\Gamma: \Iota \To \Gamma$, where $\Iota$ is the identity functor.
The natural transformation is unique when it exists.

Let $\Trans_{\cat{P}}$ denote the set of translations of~$\cat{P}$. Then $\Trans_{\cat{P}}$ has the following structure:

\begin{vlist}
\item
It is a monoid (i.e.\ a semigroup with identity), with respect to composition.
\item
It is a preordered set, with respect to the relation
\begin{align*}
\Gamma \leq \Kappa
&\;\Leftrightarrow\;
\text{$\Gamma(x) \leq \Kappa(x)$ for all~$x$}
\\
&\;\Leftrightarrow\;
\text{there exists a natural transformation $\eta^\Gamma_\Kappa: \Gamma \To \Kappa$}
\end{align*}

\item
If $\cat{P}$ is a poset, then so is $\Trans_{\cat{P}}$.
\end{vlist}

The preorder is compatible with the monoid structure in the sense that
\begin{equation}
\label{eq:ordered-monoid}
\Gamma \leq \Kappa \text{ and } \Lambda \leq \Mu
\quad \Rightarrow \quad
\Gamma \Lambda \leq \Kappa \Mu.
\end{equation}
We can think of $\Trans_{\cat{P}}$ as a sort of `positive cone' in the monoid of all endomorphisms (i.e.\ monotone functions) $\cat{P} \to \cat{P}$.

The monoid $\cat{Trans_P}$ acts on $\cat{D}^\cat{P}$ by precomposition:
\[
\begin{diagram}
\dgARROWLENGTH=1.5em
\node{\cat{P}}
	\arrow{e,t}{\Gamma}
	\arrow{se,b}{F\Gamma}
\node{\cat{P}}
	\arrow{s,r}{F}
\\
\node[2]{\cat{D}}
\end{diagram}
\]
There is a canonical natural transformation $F\eta_\Gamma: F \To F\Gamma$, induced from the natural transformation $\eta_\Gamma: \Iota \To \Gamma$.
Specifically, the functor $F$ converts each relation $x \leq \Gamma(x)$ to a map $F(x) \to F\Gamma(x)$. 
We call this a \emph{shift map}.

\begin{example}
\label{ex:real-Omega}
Let $\cat{R} = (\R, \leq)$ and let $V$ be a persistence module over~$\cat{R}$.
For any $\eps \geq 0$, consider the translation $\Omega_\eps(t) = t + \eps$. This shifts the real line up by~$\eps$, therefore $V \Omega_\eps$ is obtained from $V$ by shifting all the information down by~$\eps$. 
Indeed
\[
[V \Omega_\eps](t) = V(t+ \eps)
\]
for all $t \in \R$.
The shift map $V \To V\Omega_\eps$ is given by the maps $V(t) \to V(t+\eps)$
provided by the persistence module structure, that is, by applying the functor~$V$ to the relation $t \leq t + \eps$.
\end{example}

\begin{remark*}
We can equally well define $V\Omega_\eps$ when $\eps < 0$, but in that case there is no shift map $V \To V\Omega_\eps$.
\end{remark*}

A translation~$\Gamma$ on a preordered set~$\cat{P}$ defines a relationship between persistence modules over~$\cat{P}$ called $\Gamma$-interleaving. Here is a non-symmetric version of this relationship. This kind of non-symmetric interleaving was considered by Lesnick~\cite{Lesnick_2011}, who defined $(J_1,J_2)$-interleavings for multidimensional persistence modules.

\begin{definition}
\label{def:interleaving}
Let $\cat{P}$ be a preordered set and let $\Gamma, \Kappa \in \Trans_{\cat{P}}$. Suppose $F, G \in \cat{D}^{\cat{P}}$ are persistence modules over~$\cat{P}$ in some category~$\cat{D}$.
Then a \emph{$(\Gamma,\Kappa)$-interleaving} between $F$ and~$G$ consists of  natural transformations $\varphi: F \To G \Gamma$ and $\psi: G \To F \Kappa$
\begin{equation*}
    \xymatrix{
    \cat{P} \ar[r]^{\Gamma} \ar[d]_F \ar@<1ex>@{}[dr]|*+{\stackrel{\mathlarger{\varphi}}{\To}} 
& \cat{P} \ar[d]_G \ar[r]^{\Kappa}
\ar@<1ex>@{}[dr]|*+{\stackrel{\mathlarger{\psi}}{\To}}  
& \cat{P} \ar[d]^F \\ 
   \cat{D} \ar@{=}[r] & \cat{D} \ar@{=}[r] & \cat{D}
    }
\end{equation*}
such that 
\begin{equation}
\label{eq:interleaving1}
(\psi \Gamma) \varphi = F \eta_{\Kappa\Gamma}
\quad \text{and} \quad
(\varphi \Kappa) \psi = G \eta_{\Gamma\Kappa}.
\end{equation}
We say that $F, G$ are \emph{$(\Gamma, \Kappa)$-interleaved} if there exists a $(\Gamma,\Kappa)$-interleaving between them.
\end{definition}

Explicitly:
$(\varphi,\psi)$ is a $(\Gamma,\Kappa)$-interleaving between $F, G$ if
the diagrams
\begin{equation}
\label{eq:nat}
\xymatrix@R=1em@C=0.5em{F(x) \ar[rr] \ar[dr]_{\varphi_x} & & F(y) \ar[dr]^{\varphi_y}\\
& G(\Gamma x) \ar[rr] & & G(\Gamma y)
}
\quad
\xymatrix@R=1em@C=0.5em{& F(\Kappa x) \ar[rr] & & F (\Kappa y)\\
G (x) \ar[ur]^{\psi_x} \ar[rr] & & G (y) \ar[ur]_{\psi_y}
}
\end{equation}
commute for all $x \leq y$ in~$\cat{P}$; and the diagrams
\begin{equation}
\label{eq:int}
\xymatrix@R=1em@C=0.5em{F (x) \ar[rr] \ar[dr]_{\varphi_x} & & F (\Kappa \Gamma x)\\
& G (\Gamma x) \ar[ur]_{\psi_{\Gamma x}}
}
\quad
\xymatrix@R=1em@C=0.5em{
& F (\Kappa x) \ar[dr]^{\varphi_{\Kappa x}}\\
G (x) \ar[rr] \ar[ur]^{\psi_x} & & G (\Gamma \Kappa x).
}
\end{equation}
commute for all $x \in \cat{P}$. 

\begin{definition}
A $(\Gamma, \Gamma)$-interleaving is called a $\Gamma$-interleaving, for short. We say that $F, G$ are $\Gamma$-interleaved if there exists a $\Gamma$-interleaving between them.
\end{definition}

\begin{example}
[continuing Example~\ref{ex:real-Omega}]
\label{ex:real-Omega2}
Let $V, W$ be persistence modules over~$\cat{R}$ and let $\eps \geq 0$.
A morphism $V \to W\Omega_\eps$ can be thought of as a morphism `of degree~$\eps$' from $V$ to~$W$, in that it provides a map $V(t) \to W({t+\eps})$ for every $t$. An $\Omega_\eps$-interleaving between $V, W$ is the same as an $\eps$-interleaving between $V, W$ in the classical sense~\cite{Chazal_CS_G_G_O_2009}.
\end{example}

\begin{example}
\label{ex:0-inter}
Two persistence modules are ${\Iota}$-interleaved---that is, $(\Iota, \Iota)$-interleaved---if and only if they are isomorphic.
\end{example}

When the target category is itself a preordered set, the Thin Lemma provides a simple test for the existence of an interleaving. We will use this test many times.

\begin{lemma}
\label{lem:poset}
Let $\cat{P}, \cat{Q}$ be preordered sets, let $F, G \in \cat{Q}^{\cat{P}}$, and let $\Gamma, \Kappa \in \Trans_{\cat{P}}$. Then $F,G$ are $(\Gamma,\Kappa)$-interleaved if and only if $F(x) \leq G \Gamma(x)$ and $G(x) \leq F \Kappa (x)$ for all $x \in \cat{P}$.
\end{lemma}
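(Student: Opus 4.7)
The plan is to reduce everything to the Thin Lemma and its Corollary~\ref{cor:thin}. The key observation is that the target category $\cat{Q}$ is thin, and therefore so is $\cat{Q}^{\cat{P}}$ by Corollary~\ref{cor:thin}. Consequently, there is at most one natural transformation between any two functors in $\cat{Q}^{\cat{P}}$, and the existence of a natural transformation $H \To K$ is equivalent to the pointwise inequalities $H(x) \leq K(x)$.

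For the forward direction, suppose $F, G$ are $(\Gamma,\Kappa)$-interleaved, so that natural transformations $\varphi: F \To G\Gamma$ and $\psi: G \To F\Kappa$ exist. Applying Corollary~\ref{cor:thin} to each of these (viewed as a natural transformation between functors into the thin category $\cat{Q}$) immediately yields $F(x) \leq G\Gamma(x)$ and $G(x) \leq F\Kappa(x)$ for every $x \in \cat{P}$.

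For the reverse direction, suppose the two families of inequalities hold. By Corollary~\ref{cor:thin} there exist (unique) natural transformations $\varphi: F \To G\Gamma$ and $\psi: G \To F\Kappa$. It remains to verify the interleaving conditions \eqref{eq:interleaving1}, namely $(\psi\Gamma)\varphi = F\eta_{\Kappa\Gamma}$ and $(\varphi\Kappa)\psi = G\eta_{\Gamma\Kappa}$. But both sides of each equation are natural transformations in $\cat{Q}^{\cat{P}}$ between the same pair of functors (e.g.\ $F \To F(\Kappa\Gamma)$ for the first), and $\cat{Q}^{\cat{P}}$ is thin, so any two parallel natural transformations are automatically equal. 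Thus the interleaving equations come for free.

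I don't anticipate any real obstacle: once one invokes the Thin Lemma at the functor-category level, the main content of the lemma collapses into the pointwise test, and the compatibility diagrams~\eqref{eq:nat} and~\eqref{eq:int} all commute trivially. The only point that requires any care is noticing that $\cat{Q}^{\cat{P}}$ is itself thin, which is exactly what Corollary~\ref{cor:thin} gives us, so this is the step to invoke explicitly.
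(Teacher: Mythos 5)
Your proof is correct and takes essentially the same approach as the paper's: both reduce everything to the Thin Lemma. The paper phrases it at the level of the individual diagrams \eqref{eq:nat} and \eqref{eq:int} in $\cat{Q}$ (which commute because $\cat{Q}$ is thin), while you phrase it one level up via the thinness of $\cat{Q}^{\cat{P}}$; the two formulations are interchangeable.
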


\begin{proof}
The existence of maps $\varphi_x: F(x) \to G \Gamma(x)$ and $\psi_x: G(x) \to F \Kappa  (x)$ is equivalent to the two sets of inequalities. All diagrams \eqref{eq:nat}, \eqref{eq:int}  commute by the Thin Lemma.
\end{proof}

Here are three formal properties of the interleaving relation: functoriality, monotonicity, and the triangle inequality. The proofs are all easy, even if they seem complicated.

\begin{proposition}[functoriality]
\label{prop:functorial}
Let $\Gamma$, $\Kappa$ be translations on a preordered set $\cat{P}$. Suppose $F, G : \cat{P} \to \cat{D}$ are persistence modules in~$\cat{D}$, and $H: \cat{D} \to \cat{E}$ is a functor. Then the statement
\[
\text{$F, G$ are $(\Gamma,\Kappa)$-interleaved}
\]
implies
\[
\text{$HF, HG$ are $(\Gamma,\Kappa)$-interleaved}.
\]
\end{proposition}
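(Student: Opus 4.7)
The plan is to show that if $(\varphi, \psi)$ is a $(\Gamma, \Kappa)$-interleaving between $F$ and $G$, then the feathered pair $(H\varphi, H\psi)$ is a $(\Gamma, \Kappa)$-interleaving between $HF$ and $HG$. Since $\varphi : F \To G\Gamma$ and $\psi : G \To F\Kappa$, applying $H$ on the left yields natural transformations $H\varphi : HF \To H(G\Gamma) = (HG)\Gamma$ and $H\psi : HG \To H(F\Kappa) = (HF)\Kappa$, which have exactly the right source and target to be candidate components of an interleaving.

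Next, I would verify the two interleaving identities \eqref{eq:interleaving1}. Starting from $(\psi\Gamma)\varphi = F\eta_{\Kappa\Gamma}$, apply the functor $H$ to both sides. Using the compatibility relations $H(\alpha\beta) = (H\alpha)(H\beta)$ for vertical composition and $H(\alpha K) = (H\alpha)K$ for right-feathering (recorded at the end of the feathering discussion), the left-hand side becomes
\[
H\bigl((\psi\Gamma)\varphi\bigr) = (H(\psi\Gamma))(H\varphi) = ((H\psi)\Gamma)(H\varphi),
\]
while the right-hand side becomes $H(F\eta_{\Kappa\Gamma}) = (HF)\eta_{\Kappa\Gamma}$. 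Combining these gives exactly the first interleaving identity for $(H\varphi, H\psi)$ with respect to $HF$ and $HG$. The second identity $(\varphi\Kappa)\psi = G\eta_{\Gamma\Kappa}$ is handled by the same calculation with the roles of $F,G$ and $\Gamma,\Kappa$ swapped.

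There is essentially no real obstacle here; the statement is pure naturality bookkeeping, and the content is just that the operation $H \mapsto H\varphi$ preserves the structural equations that define an interleaving. The only thing one needs to be slightly careful about is consistently using the bracketing conventions for iterated feathering and vertical composition, so that the equalities $H(G\Gamma) = (HG)\Gamma$ and $H(F\eta_{\Kappa\Gamma}) = (HF)\eta_{\Kappa\Gamma}$ are applied correctly; both are immediate from the definitions of feathering given earlier in the section.
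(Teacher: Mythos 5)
Your proof is correct and takes essentially the same route as the paper: feather the interleaving morphisms with $H$ and apply $H$ to the two defining identities, using the compatibility of $H$ with vertical composition and right-feathering. The paper's version is just a terser rendition of the same bookkeeping.
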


\begin{proof}
Let $\varphi : F \To G\Gamma$ and $\psi : G \To F\Kappa$ be the structure morphisms of the $(\Gamma,\Kappa)$-interleaving.  Since $H$ is a functor, $H \varphi : HF \To HG\Gamma$ and $H \psi : HG \To HF\Kappa$ are natural transformations.  

By functoriality, $(H\psi\Gamma)(H\varphi) = H((\psi\Gamma)\varphi) = HF\eta_{\Kappa\Gamma}$ and $(H\varphi\Kappa)(H\psi) = H((\varphi\Kappa)\psi) = HG\eta_{\Gamma\Kappa}$, and so $(H\varphi,H\psi)$ form the required interleaving.
\end{proof}

\begin{proposition}[monotonicity]
\label{prop:monotonicity}
Let $\Gamma_1, \Gamma_2, \Kappa_1, \Kappa_2$ be translations and let $F, G$ be persistence modules over a preordered set~$\cat{P}$. Suppose $\Gamma_1 \leq \Gamma_2$ and $\Kappa_1 \leq \Kappa_2$. Then the statement
\[
\text{$F$ and $G$ are $(\Gamma_1,\Kappa_1)$-interleaved}
\]
implies
\[
\text{$F$ and $G$ are $(\Gamma_2,\Kappa_2)$-interleaved}.
\]
\end{proposition}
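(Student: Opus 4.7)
The plan is to construct the required $(\Gamma_2,\Kappa_2)$-interleaving directly from the given $(\Gamma_1,\Kappa_1)$-interleaving $(\varphi,\psi)$ by post-composing with the shift natural transformations that the hypotheses make available. Since $\Gamma_1 \leq \Gamma_2$ and $\Kappa_1 \leq \Kappa_2$ in $\Trans_{\cat{P}}$, the preorder structure supplies unique natural transformations $\eta^{\Gamma_1}_{\Gamma_2}\colon \Gamma_1 \To \Gamma_2$ and $\eta^{\Kappa_1}_{\Kappa_2}\colon \Kappa_1 \To \Kappa_2$; uniqueness comes from the Thin Lemma applied to $\cat{P}$. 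I would then define
\[
\varphi' \;=\; (G\,\eta^{\Gamma_1}_{\Gamma_2})\,\varphi \colon F \To G\Gamma_2,
\qquad
\psi' \;=\; (F\,\eta^{\Kappa_1}_{\Kappa_2})\,\psi \colon G \To F\Kappa_2,
\]
and verify the two interleaving identities \eqref{eq:interleaving1} for the pair $(\varphi',\psi')$.

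For the identity $(\psi'\Gamma_2)\,\varphi' = F\,\eta_{\Kappa_2\Gamma_2}$, I would work pointwise. At $x \in \cat{P}$, unfolding feathering gives
\[
(\psi'\Gamma_2)_x \circ \varphi'_x \;=\; F\bigl((\eta^{\Kappa_1}_{\Kappa_2})_{\Gamma_2 x}\bigr) \,\circ\, \psi_{\Gamma_2 x} \,\circ\, G\bigl((\eta^{\Gamma_1}_{\Gamma_2})_x\bigr) \,\circ\, \varphi_x.
\]
Naturality of $\psi\colon G \To F\Kappa_1$ applied to the $\cat{P}$-morphism $(\eta^{\Gamma_1}_{\Gamma_2})_x \colon \Gamma_1 x \to \Gamma_2 x$ swaps the middle pair into $F\Kappa_1\bigl((\eta^{\Gamma_1}_{\Gamma_2})_x\bigr) \circ \psi_{\Gamma_1 x}$. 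The assumed equation $(\psi\Gamma_1)\varphi = F\,\eta_{\Kappa_1\Gamma_1}$ then collapses $\psi_{\Gamma_1 x} \circ \varphi_x$ to $F\bigl((\eta_{\Kappa_1\Gamma_1})_x\bigr)$. What remains is $F$ applied to a composite of three $\cat{P}$-morphisms from $x$ to $\Kappa_2\Gamma_2 x$; by the Thin Lemma this composite equals the unique such morphism $(\eta_{\Kappa_2\Gamma_2})_x$, yielding $(F\,\eta_{\Kappa_2\Gamma_2})_x$ as required. The symmetric argument, interchanging the roles of $\varphi,\Gamma_i$ and $\psi,\Kappa_i$, delivers the identity $(\varphi'\Kappa_2)\psi' = G\,\eta_{\Gamma_2\Kappa_2}$.

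The main obstacle is purely bookkeeping: managing left- and right-feathering so that the application of naturality and the substitution of the original interleaving equation line up correctly. Conceptually there is no genuine difficulty, because every diagram built from the comparison transformations $\eta^\Gamma_\Kappa$ among translations lives in the thin category of monotone maps $\cat{P} \to \cat{P}$ and so commutes automatically. The whole argument is just the observation that the existing interleaving can be pushed forward along the order relation in the monoid $\Trans_{\cat{P}}$.
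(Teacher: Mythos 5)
Your proposal is correct and follows essentially the same route as the paper's own proof: define $\varphi'$ and $\psi'$ by post-composing $\varphi$ and $\psi$ with the comparison transformations $G\eta^{\Gamma_1}_{\Gamma_2}$ and $F\eta^{\Kappa_1}_{\Kappa_2}$, then verify the interleaving identities using naturality of $\psi$ (resp.\ $\varphi$), the original interleaving equation, and the Thin Lemma. The only difference is presentational — you unwind the argument pointwise where the paper draws one commuting diagram of natural transformations — but the structure and the appeals are identical.
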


\begin{proof}
The assertions $\Gamma_{1} \leq \Gamma_{2}$ and $\Kappa_{1} \leq \Kappa_{2}$ imply that there are natural transformations $\xi : \Gamma_{1} \To \Gamma_{2}$ and $\chi : \Kappa_{1} \To \Kappa_{2}$.  

Let $\varphi_{1} : F \To G \Gamma_{1}$ and $\psi_{1} : G \To F \Kappa_{1}$ form a $(\Gamma_{1}, \Gamma_{2})$-interleaving.  Let $\varphi_{2}$ and $\psi_{2}$ be the composites
\[
	F \stackrel{\varphi_{1}}{\Longrightarrow} G\Gamma_{1} \stackrel{G\xi}{\Longrightarrow} G\Gamma_{2}
	\quad
	\text{and}
	\quad
	G \stackrel{\psi_{1}}{\Longrightarrow} F\Kappa_{1} \stackrel{F\chi}{\Longrightarrow} F\Kappa_{2},
\]
respectively.

To show that $(\psi_{2} \Gamma_{2})\phi_{2} = F \eta_{\Kappa_{2}\Gamma_{2}}$, we consider the diagram
\[
	\xymatrix{
		F \ar@2[r]^{\varphi_{1}} \ar@2[dr]_{F\eta_{K_{1}\Gamma_{1}}}
			& G\Gamma_{1} \ar@2[r]^{G\xi} \ar@2[d]^{\psi_{1}\Gamma_{1}}
			& G\Gamma_{2} \ar@2[d]^{\psi_{1}\Gamma_{2}} \\
			& F\Kappa_{1}\Gamma_{1} \ar@2[r]_{F\Kappa_{1}\xi}  
			& F\Kappa_{1}\Gamma_{2} \ar@2[d]^{F\chi\Gamma_{2}} 	\\
			&  & F\Kappa_{2}\Gamma_{2}
	}
\]
of functors and natural transformations.  
The top-left triangle commutes because $(\varphi_{1},\psi_{1})$ is a $(\Gamma_{1},\Kappa_{1})$-interleaving.  The top-right square commutes because $\psi_{1}$ is a natural transformation.  

The composite of the arrows along the top and down the right side is, by definition, $(\psi_{2}\Gamma_{2})\varphi_{2}$.
The other path is $F$ applied to the string of inequalities 
\[
	\Iota \leq \Kappa_{1}\Gamma_{1} \leq \Kappa_{1}\Gamma_{2} \leq \Kappa_{2}\Gamma_{2},
\]
which is the unique natural transformation $\eta_{\Kappa_{2}\Gamma_{2}}$.
Since the diagram commutes, $(\psi_{2}\Gamma_{2})\varphi_{2} = F\eta_{\Kappa_{2}\Gamma_{2}}$.

A similar calculation gives $(\varphi_{2}\Kappa_{2})\psi_{2} = G \eta_{\Gamma_{2}\Kappa_{2}}$. Thus $(\varphi_{2},\psi_{2})$ is a $(\Gamma_{2},\Kappa_{2})$-interleaving.
\end{proof}

\begin{proposition}[triangle inequality]
\label{prop:triangle}
Let $\Gamma_1$, $\Gamma_2$, $\Kappa_1$, $\Kappa_2$ be translations and let $F, G, H$ be persistence modules over a preordered set~$\cat{P}$.
Then the statements
\begin{align*}
&\text{$F$ and $G$ are $(\Gamma_1,\Kappa_1)$-interleaved}
\\
&
\text{$G$ and $H$ are $(\Gamma_2,\Kappa_2)$-interleaved}
\end{align*}
together imply
\[
\text{$F$ and $H$ are $(\Gamma_2 \Gamma_1, \Kappa_1 \Kappa_2)$-interleaved}.
\]
\end{proposition}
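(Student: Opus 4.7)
My plan is to construct the interleaving between $F$ and $H$ by composing the two given interleavings, with suitable feathering to make the domains and codomains match up. Concretely, let $(\varphi_1,\psi_1)$ be the $(\Gamma_1,\Kappa_1)$-interleaving of $F$ and $G$, and $(\varphi_2,\psi_2)$ the $(\Gamma_2,\Kappa_2)$-interleaving of $G$ and $H$. I would define
\[
\Phi = (\varphi_2\Gamma_1)\,\varphi_1 : F \To H\Gamma_2\Gamma_1,
\qquad
\Psi = (\psi_1\Kappa_2)\,\psi_2 : H \To F\Kappa_1\Kappa_2,
\]
by feathering $\varphi_2:G\To H\Gamma_2$ on the right by $\Gamma_1$ and $\psi_1:G\To F\Kappa_1$ on the right by $\Kappa_2$, then composing vertically with $\varphi_1$ and $\psi_2$ respectively. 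This is clearly the only sensible choice of candidate.

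Next I would verify the two interleaving identities. For $(\Psi\Gamma_2\Gamma_1)\Phi = F\eta_{\Kappa_1\Kappa_2\Gamma_2\Gamma_1}$, I would draw the pasting diagram
\[
\xymatrix@C=3em{
F \ar@2[r]^{\varphi_1} \ar@2[dr]_{F\eta_{\Kappa_1\Gamma_1}}
  & G\Gamma_1 \ar@2[r]^{\varphi_2\Gamma_1} \ar@2[d]^{\psi_1\Gamma_1}
  & H\Gamma_2\Gamma_1 \ar@2[d]^{\psi_2\Gamma_2\Gamma_1} \\
  & F\Kappa_1\Gamma_1 \ar@2[dr]_{F\Kappa_1\eta_{\Kappa_2\Gamma_2}\Gamma_1}
  & G\Kappa_2\Gamma_2\Gamma_1 \ar@2[d]^{\psi_1\Kappa_2\Gamma_2\Gamma_1} \\
  & & F\Kappa_1\Kappa_2\Gamma_2\Gamma_1
}
\]
The top-left triangle commutes because $(\varphi_1,\psi_1)$ is a $(\Gamma_1,\Kappa_1)$-interleaving; the top-right square is the naturality of $\psi_1$ feathered by $\Gamma_1$ on the right (applied to the morphism $\varphi_2:G\To H\Gamma_2$, i.e.\ the relation $(\psi_1\Gamma_2)(\varphi_2) \circ \text{stuff}$ — more cleanly, feather the identity $(\psi_2\Gamma_2)\varphi_2 = G\eta_{\Kappa_2\Gamma_2}$ on the right by $\Gamma_1$ and then apply $\psi_1$ feathered appropriately); the bottom triangle is $\psi_1$ applied after the shift map of $G$ induced by $\eta_{\Kappa_2\Gamma_2}\Gamma_1$, which by naturality of $\psi_1$ equals $F\Kappa_1$ applied to the same shift, composed on the left with $\psi_1\Gamma_1$. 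The composite along the outer path is $F$ applied to the chain of inequalities $\Iota \leq \Kappa_1\Gamma_1 \leq \Kappa_1\Kappa_2\Gamma_2\Gamma_1$, and by the Thin Lemma (applied inside $\Trans_{\cat{P}}$) this equals $F\eta_{\Kappa_1\Kappa_2\Gamma_2\Gamma_1}$.

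The symmetric identity $(\Phi\Kappa_1\Kappa_2)\Psi = H\eta_{\Gamma_2\Gamma_1\Kappa_1\Kappa_2}$ follows by an entirely analogous diagram chase with the roles of $(\varphi_1,\psi_1)$ and $(\varphi_2,\psi_2)$ swapped appropriately. The main obstacle is purely bookkeeping: keeping track of where each natural transformation is being feathered, so that the naturality square of $\psi_1$ (resp.\ $\varphi_2$) against the other interleaving morphism fits correctly between the two component interleaving identities. Once the diagram is set up and the correct feathering is written down, everything collapses by a combination of naturality, the two given interleaving equations, and the fact that $\Trans_\cat{P}$ is itself thin so that any two composites of $\eta$'s with matching source and target agree.
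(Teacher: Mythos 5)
Your proposal is correct and takes essentially the same approach as the paper: the composite morphisms $\Phi,\Psi$ are the paper's $\varphi_3,\psi_3$, and the verification is the same pasting of the two given interleaving identities with a naturality square for $\psi_1$, closed off by the Thin Lemma applied to the composite of units. The paper's diagram merely draws the extra arrow $G\Gamma_1 \To G\Kappa_2\Gamma_2\Gamma_1$ so that the region you hedge about decomposes cleanly into a triangle (the $(\varphi_2,\psi_2)$ interleaving identity feathered by $\Gamma_1$) and a parallelogram (naturality of $\psi_1$), which is exactly what your prose ends up arguing after the parenthetical correction.
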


\begin{proof}
Let $(\varphi_{1},\psi_{1})$ be a $(\Gamma_{1},\Kappa_{1})$-interleaving of $F$ and $G$, and let $(\varphi_{2},\psi_{2})$ be a $(\Gamma_{2},\Kappa_{2})$-interleaving of $G$ and $H$.  
\begin{equation*}
    \xymatrix{
    \cat{P} \ar[r]^{\Gamma_1} \ar[d]_F \ar@<1ex>@{}[dr]|*+{\stackrel{\mathlarger{\varphi_1}}{\To}} 
& \cat{P} \ar[d]^G \ar[r]^{\Gamma_2}
\ar@<1ex>@{}[dr]|*+{\stackrel{\mathlarger{\varphi_2}}{\To}}  
& \cat{P} \ar[d]^H \ar[r]^{\Kappa_2}
\ar@<1ex>@{}[dr]|*+{\stackrel{\mathlarger{\psi_2}}{\To}}  
& \cat{P} \ar[d]^G \ar[r]^{\Kappa_1}
\ar@<1ex>@{}[dr]|*+{\stackrel{\mathlarger{\psi_1}}{\To}}  
& \cat{P} \ar[d]^F \\ 
   \cat{D} \ar@{=}[r] 
& \cat{D} \ar@{=}[r] 
& \cat{D} \ar@{=}[r] 
& \cat{D} \ar@{=}[r] 
& \cat{D}
    }
\end{equation*}
 Define $\varphi_{3}$ and~$\psi_{3}$ as the composites
\[
	F \stackrel{\varphi_{1}}{\Longrightarrow} G\Gamma_{1} \stackrel{\varphi_{2}\Gamma_{1}}{\Longrightarrow} H \Gamma_{2} \Gamma_{1}
\]
and
\[
	H \stackrel{\psi_{2}}{\Longrightarrow} G \Kappa_{2} \stackrel{\psi_{1}\Kappa_{2}}{\Longrightarrow} F \Kappa_{1} \Kappa_{2}
\]
respectively.  

Consider the diagram
\[
	\xymatrix{
		F \ar@2[dr]_{\varphi_{1}} \ar@2[rr]^{F\eta_{\Kappa_{1}\Gamma_{1}}}
			& & F\Kappa_{1}\Gamma_{1}  \ar@2[rr]^{F\Kappa_{1} \eta_{\Kappa_{2}\Gamma_{2}}\Gamma_{1}}
			& & F\Kappa_{1} \Kappa_{2} \Gamma_{2} \Gamma_{1} \\
		& G \Gamma_{1} \ar@2[ur]^{\psi_{1}\Gamma_{1}} \ar@2[rr]^{G\eta_{\Kappa_{2}\Gamma_{2}}\Gamma_{1}} \ar@2[dr]_{\varphi_{2}\Gamma_{1}}
			& & G \Kappa_{2} \Gamma_{2} \Gamma_{1} \ar@2[ur]_{\psi_{1}\Kappa_{2} \Gamma_{2} \Gamma_{1}} \\ 
		& &  H \Gamma_{2} \Gamma_{1}			 \ar@2[ur]_{\psi_{2}\Gamma_{2}\Gamma_{1}} 			 
	}
\]
of functors and natural transformations.  The two triangles commute because $(\varphi_{i},\psi_{i})$ is a $(\Gamma_{i},\Kappa_{i})$-interleaving, for $i = 1,2$.  The parallelogram commutes because $\psi_{1}$ is a natural transformation, and so the full diagram commutes. 

Composing first down to the right, then up to the right, yields $(\psi_{3} \Gamma_{2}\Gamma_{1}) \varphi_{3}$.  Since
\[
	(\Kappa_{1} \eta_{\Kappa_{2}\Gamma_{2}} \Gamma_{1})\eta_{\Kappa_{1}\Gamma_{1}} = \eta_{\Kappa_{1} \Kappa_{2} \Gamma_{2} \Gamma_{1}}
\]
and $F$ respects composition, the composite along the top is $F \eta_{\Kappa_{1} \Kappa_{2} \Gamma_{2} \Gamma_{1}}$.  It follows that $(\psi_{3} \Gamma_{2}\Gamma_{1}) \varphi_{3} = F \eta_{\Kappa_{1} \Kappa_{2} \Gamma_{2} \Gamma_{1}}$.  

Similarly, $(\varphi_{3} \Kappa_{2} \Kappa_{1}) \psi_{3} = G  \eta_{\Gamma_{2} \Gamma_{1} \Kappa_{1} \Kappa_{2} }$. Thus $(\varphi_{3}, \psi_{3})$ is a $(\Gamma_{2} \Gamma_{1}, \Kappa_{1} \Kappa_{2})$-interleaving.
\end{proof}

These notions give us rather a lot of relationships between the elements of~$\cat{D}^{\cat{P}}$. As a measure of similarity between two persistence modules, one can consider either of:
\begin{align*}
\left\{ \Gamma \mid \text{$F,G$ are $\Gamma$-interleaved} \right\}
&\subseteq \Trans_{\cat{P}}
\\
\left\{ (\Gamma, \Kappa)
\mid
\text{$F,G$ are $(\Gamma,\Kappa)$-interleaved} \right\}
&\subseteq \Trans_{\cat{P}} \times \Trans_{\cat{P}}
\end{align*}
By Proposition~\ref{prop:monotonicity}, these are `up-sets' or `neighbourhoods of~$\infty$' in the respective posets that contain them. The larger the neighbourhood, the more similar are $F$ and~$G$. But this is not easy to work with. 
Here are two strategies for simplifying this morass of information:
\begin{vlist}
\item
(Section~\ref{sec:sublinear})
Project $\Trans_{\cat{P}}$ onto the monoid $[0, \infty]$.
\item
(Section~\ref{sec:superlinear})
Embed the monoid $[0, \infty)$ in $\Trans_{\cat{P}}$.
\end{vlist}
Either strategy allows us to define a metric on $\cat{D}^{\cat{P}}$. In some cases the resulting metrics are equal. In particular, either method can be used to recover the usual interleaving distance for modules over $\cat{R} = (\R,\leq)$.

\subsection{Sublinear projections}
\label{sec:sublinear}

Let $\cat{P} = (P, \leq)$ be a preordered set. A \emph{sublinear projection} is a function $\omega: \Trans_{\cat{P}} \to [0,\infty]$ such that
\begin{vlist}
\item
$\omega_\Iota = 0$, where $\Iota$ is the identity translation; and

\item
(sublinearity)
$\omega_{\Gamma_1 \Gamma_2}
\leq
\omega_{\Gamma_1} + \omega_{\Gamma_2}$
for all $\Gamma_1, \Gamma_2$.

\end{vlist} 

More stringently, a sublinear projection is \emph{monotone} if $\omega_\Gamma \leq \omega_\Kappa$ whenever $\Gamma \leq \Kappa$. We largely work without this condition, although most of the sublinear projections we encounter are indeed monotone. If needed, a sublinear projection that isn't monotone may be replaced by its `monotone hull'
\[
\overline{\omega}_\Gamma
=
\inf \left( \omega_{\Gamma'} \mid \Gamma' \geq \Gamma \right)
\]
which is a monotone sublinear projection.

Given a sublinear projection, it is a simple matter to define an interleaving distance on persistence modules over~$\cat{P}$ in an arbitrary category~$\cat{D}$.

\begin{definition}
We say that a translation $\Gamma$ is an \emph{$\eps$-translation} if $\omega_\Gamma \leq \eps$.
Persistence modules $F, G \in \cat{D}^{\cat{P}}$ are \emph{$\eps$-interleaved with respect to~$\omega$} if they are $(\Gamma, \Kappa)$-interleaved for some pair of $\eps$-translations $\Gamma, \Kappa$.
\end{definition}

%

\begin{definition}
The \emph{interleaving distance} between $F,G \in \cat{D}^{\cat{P}}$ is
\begin{equation}
\meto(F,G)
=
\inf
\left\{ \eps \in [0, \infty)
\mid
\text{$F,G$ are $\eps$-interleaved with respect to~$\omega$} \right\}.
\end{equation}
We define $\meto(F,G) = \infty$ if the set on the right hand side is empty.
Notice that the set whose infimum we take must be an interval of the form $(t,\infty)$ or $[t,\infty)$ if it is not empty. This is because the statement ``$F,G$ are $\eps$-interleaved'' becomes weaker as $\eps$ increases.
\end{definition}

We leave to the reader the following corollary Proposition~\ref{prop:monotonicity} (monotonicity):

\begin{proposition}
Let $\overline{\omega}$ be the monotone hull of~$\omega$. Then $\meto = \met^{\overline{\omega}}$.
\qed
\end{proposition}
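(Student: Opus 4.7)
The plan is to prove both inequalities $\met^{\overline{\omega}} \leq \met^\omega$ and $\met^\omega \leq \met^{\overline{\omega}}$.

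The first inequality is immediate from the definitions. Taking $\Gamma' = \Gamma$ in the infimum defining $\overline{\omega}$ gives $\overline{\omega}_\Gamma \leq \omega_\Gamma$ for every translation $\Gamma$. Hence any $\eps$-translation with respect to~$\omega$ is automatically an $\eps$-translation with respect to~$\overline{\omega}$, and any $\eps$-interleaving in the $\omega$ sense is an $\eps$-interleaving in the $\overline{\omega}$ sense. The inequality $\met^{\overline{\omega}}(F,G) \leq \met^\omega(F,G)$ follows by taking infima.

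For the reverse inequality, I would fix $F, G \in \cat{D}^{\cat{P}}$ and suppose $\met^{\overline{\omega}}(F,G) < \eps$; it then suffices to show $\met^\omega(F,G) \leq \eps$. By definition, there exist translations $\Gamma, \Kappa$ with $\overline{\omega}_\Gamma, \overline{\omega}_\Kappa < \eps$ such that $F$ and $G$ are $(\Gamma,\Kappa)$-interleaved. Given any $\delta > 0$, the definition of the monotone hull as an infimum yields $\Gamma' \geq \Gamma$ and $\Kappa' \geq \Kappa$ with $\omega_{\Gamma'} \leq \eps + \delta$ and $\omega_{\Kappa'} \leq \eps + \delta$. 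Here is where Proposition~\ref{prop:monotonicity} enters: since $\Gamma \leq \Gamma'$ and $\Kappa \leq \Kappa'$, the $(\Gamma,\Kappa)$-interleaving upgrades to a $(\Gamma',\Kappa')$-interleaving, showing that $F, G$ are $(\eps+\delta)$-interleaved with respect to~$\omega$. Therefore $\met^\omega(F,G) \leq \eps + \delta$, and letting $\delta \to 0$ gives $\met^\omega(F,G) \leq \eps$.

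There is no real obstacle here; the only point that requires any care is the use of an $\eps+\delta$ slack to extract a genuine translation from the infimum defining $\overline{\omega}$, since the infimum need not be attained. Everything else is a direct application of monotonicity of interleavings together with the tautological inequality $\overline{\omega} \leq \omega$.
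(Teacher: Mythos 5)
Your proof is correct and takes the approach the paper indicates: the statement is left to the reader precisely as a corollary of Proposition~\ref{prop:monotonicity} (monotonicity), and that is the lemma your argument turns on. One small simplification: the $\delta$ slack is unnecessary. Since $\overline{\omega}_\Gamma < \eps$ and $\overline{\omega}_\Gamma$ is defined as an infimum, you can directly extract $\Gamma' \geq \Gamma$ with $\omega_{\Gamma'} < \eps$ (and similarly for $\Kappa$), upgrade the interleaving via monotonicity, and conclude $\met^\omega(F,G) \leq \eps$ without the limiting step $\delta \to 0$.
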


This is where we have been heading:

\begin{theorem}
\label{thm:inter-sub}
The interleaving distance $\met = \meto$ is an extended pseudometric on $\cat{D}^{\cat{P}}$.
\end{theorem}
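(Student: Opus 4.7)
The plan is to verify the three defining properties of an extended pseudometric in turn: $\met(F,F)=0$, symmetry, and the triangle inequality. Non-negativity is immediate from the definition (we take an infimum over a subset of $[0,\infty]$).

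For $\met(F,F) = 0$, I would note that the identity natural transformation $\Id_F: F \To F = F\Iota$ together with itself gives an $(\Iota,\Iota)$-interleaving of $F$ with itself (the coherence conditions \eqref{eq:interleaving1} reduce to $\Id_F = F\eta_\Iota$, which holds since $\eta_\Iota$ is itself the identity). Since $\omega_\Iota = 0$ by the first axiom of a sublinear projection, $F$ is $\eps$-interleaved with itself for every $\eps \geq 0$, so the infimum is $0$.

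For symmetry, I would observe directly from Definition~\ref{def:interleaving} that the notion of $(\Gamma,\Kappa)$-interleaving is symmetric in the pair: if $(\varphi,\psi)$ is a $(\Gamma,\Kappa)$-interleaving of $F$ with $G$, then $(\psi,\varphi)$ is a $(\Kappa,\Gamma)$-interleaving of $G$ with $F$. Thus the collection of $\eps$-interleaved pairs is symmetric in $F$ and $G$ (the definition of ``$\eps$-interleaved'' is already symmetric, since it requires \emph{both} $\Gamma$ and $\Kappa$ to be $\eps$-translations), and hence $\meto(F,G) = \meto(G,F)$.

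The triangle inequality is the main step, and it is exactly where sublinearity is used. Given $\eps_1 > \meto(F,G)$ and $\eps_2 > \meto(G,H)$, choose $\eps_i$-translations $\Gamma_i, \Kappa_i$ so that $F,G$ are $(\Gamma_1,\Kappa_1)$-interleaved and $G,H$ are $(\Gamma_2,\Kappa_2)$-interleaved. By Proposition~\ref{prop:triangle}, $F,H$ are $(\Gamma_2\Gamma_1,\Kappa_1\Kappa_2)$-interleaved. Sublinearity gives
\[
\omega_{\Gamma_2\Gamma_1} \leq \omega_{\Gamma_2} + \omega_{\Gamma_1} \leq \eps_1+\eps_2,
\qquad
\omega_{\Kappa_1\Kappa_2} \leq \omega_{\Kappa_1} + \omega_{\Kappa_2} \leq \eps_1+\eps_2,
\]
so $F,H$ are $(\eps_1+\eps_2)$-interleaved and $\meto(F,H) \leq \eps_1+\eps_2$. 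Taking the infimum over $\eps_1,\eps_2$ yields $\meto(F,H) \leq \meto(F,G) + \meto(G,H)$. There is no real obstacle here beyond making sure the bookkeeping with the infima is clean: the properties of $\omega$ (sublinearity and $\omega_\Iota=0$) are precisely engineered so that the abstract triangle inequality from Proposition~\ref{prop:triangle} descends to a numerical triangle inequality, and so that identity interleavings have weight zero.
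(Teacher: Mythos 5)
Your proof is correct and follows essentially the same approach as the paper: reflexivity via the identity interleaving with $\omega_\Iota = 0$, symmetry by swapping the roles of $\varphi$ and $\psi$ (which the paper simply calls "clear"), and the triangle inequality via Proposition~\ref{prop:triangle} combined with sublinearity of $\omega$, then passing to infima. The only difference is that you spell out the symmetry argument in more detail than the paper does.
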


\begin{proof}
Clearly $\met$ is symmetric; and $\met(F,F) = 0$ because $F$ is $\Iota$-interleaved with itself.
It remains to verify the triangle inequality.
Suppose $F, G$ are $\eps_1$-interleaved and $G,H$ are $\eps_2$-interleaved. Then we can find $\eps_j$-translations $\Gamma_j, \Kappa_j$ (for $j = 1,2$) such that  $F,G$ are $(\Gamma_1, \Kappa_1)$-interleaved and $G,H$ are $(\Gamma_2, \Kappa_2)$-interleaved.
By Proposition~\ref{prop:triangle} it follows that $F, H$ are $(\Gamma_2 \Gamma_1, \Kappa_1 \Kappa_2)$-interleaved.
Sublinearity implies
\begin{alignat*}{2}
\omega_{\Gamma_2 \Gamma_1}
&\leq \omega_{\Gamma_2} + \omega_{\Gamma_1}
&& \leq \eps_2 + \eps_1
\\
\omega_{\Kappa_1 \Kappa_2}
&\leq \omega_{\Kappa_1} + \omega_{\Kappa_2}
&& \leq \eps_1 + \eps_2
\end{alignat*}
so $F,H$ are $(\eps_1+\eps_2)$-interleaved with respect to~$\omega$. The result now follows by letting $\eps_1, \eps_2$ approach their respective infima.
\end{proof}

\begin{theorem}
\label{thm:sub-stability}
Let $F,G \in \cat{D}^{\cat{P}}$ and let $H: \cat{D} \to \cat{E}$. Then $\met(HF,HG) \leq \met(F,G)$.
\end{theorem}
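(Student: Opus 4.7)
The plan is to leverage functoriality of interleavings (Proposition \ref{prop:functorial}) to show that every $\eps$-interleaving between $F$ and $G$ immediately yields an $\eps$-interleaving between $HF$ and $HG$, using the \emph{same} pair of translations. Since the interleaving distance is defined as an infimum over a set of witnessing~$\eps$'s, enlarging that set (or showing it contains the original one) automatically decreases the infimum.

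More concretely, I would first dispose of the trivial case $\met(F,G)=\infty$, where the inequality is vacuous. Otherwise, fix any $\eps > \met(F,G)$. By the definition of interleaving distance, there exist $\eps$-translations $\Gamma, \Kappa \in \Trans_\cat{P}$ (i.e.\ with $\omega_\Gamma, \omega_\Kappa \leq \eps$) such that $F$ and $G$ are $(\Gamma,\Kappa)$-interleaved. Applying Proposition \ref{prop:functorial} with the functor $H: \cat{D} \to \cat{E}$, we conclude that $HF$ and $HG$ are also $(\Gamma,\Kappa)$-interleaved. Since $\Gamma$ and $\Kappa$ are still $\eps$-translations under the same sublinear projection $\omega$ on $\Trans_\cat{P}$, this says $HF$ and $HG$ are $\eps$-interleaved with respect to~$\omega$, so $\met(HF,HG) \leq \eps$.

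Taking the infimum over all $\eps > \met(F,G)$ yields $\met(HF,HG) \leq \met(F,G)$, as required. There is no real obstacle here: the entire argument is formal, and the key point is that the translations $\Gamma, \Kappa$ live on the indexing category $\cat{P}$ and are therefore unaffected by post-composition with $H$; only the natural transformations $\varphi, \psi$ get transported to $H\varphi, H\psi$, and Proposition \ref{prop:functorial} already verifies that these satisfy the required interleaving identities. In other words, stability at this level is purely a consequence of the fact that $H^\cat{P}: \cat{D}^\cat{P} \to \cat{E}^\cat{P}$ preserves the $(\Gamma,\Kappa)$-interleaving relation, with $\omega$ depending only on $\cat{P}$.
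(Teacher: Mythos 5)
Your proof is correct and takes exactly the same route as the paper's: apply Proposition~\ref{prop:functorial} to transport a $(\Gamma,\Kappa)$-interleaving of $F,G$ to one of $HF,HG$, observe that $\Gamma,\Kappa$ remain $\eps$-translations since $\omega$ lives on $\cat{P}$, and pass to the infimum. You merely spell out the $\eps$-quantification and the infinite case, which the paper leaves implicit.
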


\begin{proof}
If $F,G$ are $(\Gamma, \Kappa)$-interleaved for some pair $\Gamma, \Kappa$ of $\eps$-translations, then $HF,HG$ are also $(\Gamma,\Kappa)$-interleaved, by functoriality.
The result follows from this implication.
\end{proof}

We thank an anonymous referee for suggesting the following example.

Let $\cat{V}$ denote the category of finite-dimensional vector spaces over a field~$\F$. The operation of taking the dual of a vector space defines a functor $[\cdot]^*: \cat{V} \to \cat{V}^{\mathrm{op}}$, where $\cat{V}^{\mathrm{op}}$ denotes the category which is `{opposite}' to~$\cat{V}$ in the sense of having all the morphisms reversed. It can equally well be interpreted as a functor $[\cdot]^*: \cat{V}^{\mathrm{op}} \to \cat{V}$.

\begin{example}
Let $\cat{P}$ be a preordered set with a sublinear projection, so that we have interleaving distances~$\met$ for every functor category on~$\cat{P}$. The dual of a module $F: \cat{P} \to \cat{V}$ is a module $F^*: \cat{P} \to \cat{V}^{\mathrm{op}}$. Moreover, given two such modules $F,G : \cat{P} \to \cat{V}$ we have
\[
\met(F,G) = \met(F^*,G^*).
\]
Thus dualization defines an isometry $\cat{V}^{\cat{P}} \to (\cat{V}^{\mathrm{op}})^{\cat{P}}$.
\end{example}

\begin{proof}
By Theorem~\ref{thm:sub-stability} we have
\[
\met(F,G) \geq \met(F^*,G^*) \geq \met(F^{**},G^{**}).
\]
The well-known natural isomorphism between a finite-dimensional vector space~$V$ and its double dual~$V^{**}$ gives rise to isomorphisms $F \cong F^{**}$ and $G \cong G^{**}$ so all three expressions must be equal.
\end{proof}

The result does \emph{not} work if we regard $F^*$ as a functor $\cat{P}^{\mathrm{op}} \to \cat{V}$ instead. In general, the semigroups of translations $\Trans_\cat{P}$ and $\Trans_{\cat{P}^\mathrm{op}}$ may be quite different, and there is no general way to transfer interleaving metrics from one domain to the other. Categories of modules over a preordered set and its opposite have no reason to behave similarly (except in special instances such as $\cat{P} = \cat{R}$). The direction of the preordered set matters.

\subsection{Lawvere metrics}
\label{sec:lawvere}
We now identify a rich source of sublinear projections.

A \emph{Lawvere metric space}\footnote{%
Lawvere spaces are also called extended quasipseudometric spaces.%
}%
~\cite{lawvere:metric} is a set $P$ together with a function $d: P \times P \to [0,\infty]$ such that $d(x,x)=0$ for all $x \in P$, and $d(x,z) \leq d(x,y) + d(y,z)$ for all $x,y,z \in P$.
Note that there is no symmetry condition and the distance from one object to a different object may be zero or infinite.

\begin{proposition} \label{prop:sublinear}
Let $\cat{P} = (P, \leq)$ be a preordered set, and let $d$ be a Lawvere metric on~$P$. Then the formula
\[
\omega_\Gamma = 
\sup \left( d(x, \Gamma(x)) \mid x \in P \right)
\]
defines a sublinear projection~$\omega = \omega^d$.
\end{proposition}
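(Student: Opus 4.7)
The plan is to verify the two defining properties of a sublinear projection directly from the axioms of a Lawvere metric. Both conditions should drop out by pointwise arguments followed by taking suprema.

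First I would check $\omega_\Iota = 0$. Since $\Iota(x) = x$ and a Lawvere metric satisfies $d(x,x) = 0$, the supremum $\sup_{x \in P} d(x, \Iota(x))$ is a supremum of zeros, hence zero.

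Next I would establish the sublinearity inequality $\omega_{\Gamma_1 \Gamma_2} \leq \omega_{\Gamma_1} + \omega_{\Gamma_2}$. The key step is to apply the Lawvere triangle inequality at the intermediate point $\Gamma_2(x)$: for each $x \in P$,
\[
d(x, \Gamma_1 \Gamma_2(x)) \;\leq\; d(x, \Gamma_2(x)) + d(\Gamma_2(x), \Gamma_1(\Gamma_2(x))).
\]
The first summand is bounded above by $\omega_{\Gamma_2}$ by definition of $\omega$. For the second summand, setting $y = \Gamma_2(x)$ we recognize $d(y, \Gamma_1(y)) \leq \omega_{\Gamma_1}$. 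Thus $d(x, \Gamma_1 \Gamma_2(x)) \leq \omega_{\Gamma_1} + \omega_{\Gamma_2}$ for every $x$, and taking the supremum over $x$ on the left yields the desired bound.

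There is no real obstacle here; the only thing to be slightly careful about is the direction of composition (making sure we insert $\Gamma_2(x)$ rather than $\Gamma_1(x)$ as the intermediate point, since $\Gamma_1 \Gamma_2$ means $\Gamma_1 \circ \Gamma_2$) and the handling of the value $\infty$, which is absorbed without issue by the convention that $a + \infty = \infty$ in $[0,\infty]$. Monotonicity of $\omega^d$ with respect to the translation order is not required by the definition and need not be verified, though it would follow from monotonicity of $d$ in its second argument when that holds.
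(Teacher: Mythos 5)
Your proof is correct and follows exactly the same route as the paper: verify $\omega_\Iota = 0$ from $d(x,x)=0$, and obtain sublinearity by applying the Lawvere triangle inequality at the intermediate point $\Gamma_2(x)$ and then taking the supremum. The remarks about the order of composition and the $\infty$-convention are sensible but not points the paper dwells on.
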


\begin{proof}
Clearly $\omega_\Gamma \in [0,\infty]$ and $\omega_I = 0$. For sublinearity, note that
\[
d(x, \Gamma_1 \Gamma_2 (x))
\leq
d(x, \Gamma_2(x)) + d(\Gamma_2(x), \Gamma_1\Gamma_2(x))
\leq
\omega_{\Gamma_2} + \omega_{\Gamma_1}
\]
for all~$x$, and therefore $\omega_{\Gamma_1 \Gamma_2} \leq \omega_{\Gamma_2} + \omega_{\Gamma_1}$ as required.
\end{proof}

Therefore, using Theorem~\ref{thm:inter-sub}, any choice of Lawvere metric on~$\cat{P}$ defines an interleaving distance on $\cat{D}^{\cat{P}}$. 
It may be helpful to unpack the definitions here:

\begin{vlist}
\item
$F,G$ satisfy $\met(F,G) \leq \eps$ if and only if they are $\eta$-interleaved for all $\eta > \eps$.

\item
$F,G$ are $\eta$-interleaved if and only if they are $(\Gamma, \Kappa)$-interleaved for some $\Gamma, \Kappa$ which satisfy the conditions
\[
d(x, \Gamma(x)) \leq \eta,
\quad
d(x, \Kappa(x)) \leq \eta
\]
for all $x \in P$.
\end{vlist}

\begin{example}
[continuing Examples \ref{ex:real-Omega} and~\ref{ex:real-Omega2}]
\label{ex:real-Omega3}
Let $\omega$ be the sublinear projection associated with the standard metric $d(s,t) = |s-t|$ on the real line.
If $\Gamma$ is a translation with $\omega_\Gamma \leq \eps$, then
\[
t \leq \Gamma(t) \leq t + \eps
\]
for all $t \in \R$, and so $\Gamma \leq \Omega_\eps$. Therefore, using Proposition~\ref{prop:monotonicity}:
\begin{align*}
&
\text{$F,G$ are $\eps$-interleaved with respect to~$\omega$}
\\
&\Leftrightarrow\;
\text{$F,G$ are $(\Gamma,\Kappa)$-interleaved for some $\Gamma,\Kappa$ with $\omega_{\Gamma},\omega_{\Kappa} \leq \eps$}
\\
&\Leftrightarrow\;
\text{$F,G$ are $\Omega_\eps$-interleaved}
\end{align*}
We conclude that $\met = \met^\omega$ is the usual interleaving distance for persistence modules over~$\cat{R}$.
\end{example}

\subsection{Superlinear families}
\label{sec:superlinear}

In Sections \ref{sec:sublinear} and~\ref{sec:lawvere}, we constructed interleaving distances on~$\cat{D}^{\cat{P}}$ by projecting $\Trans_{\cat{P}}$ sublinearly onto $[0,\infty]$. Now we consider the dual approach.

Let $\cat{P} = (P, \leq)$ be a preordered set. A \emph{superlinear family} is a 
function $\Omega: [0, \infty) \to \Trans_{\cat{P}}$, which
satisfies
$\Omega_{\eps_1 + \eps_2} \geq \Omega_{\eps_1} \Omega_{\eps_2}$
for all $\eps_1, \eps_2 \geq 0$.
%

Here are two consequences of the definition:
\begin{vlist}
\item
The family $(\Omega_\eps)$ is monotone. Indeed,
\[
\Omega_{\eps_1}
=
\Iota
\Omega_{\eps_1}
\leq
\Omega_{\eps_2 - \eps_1}
\Omega_{\eps_1}
\leq
\Omega_{\eps_2}
\]
whenever $\eps_1 \leq \eps_2$.

\item
If $\cat{P}$ is a poset\footnote{%
For preordered sets, the statement is that there exist natural isomorphisms between the functors $\Omega_0^2$ and~$\Omega_0$, and between the functors $\Omega_0 \Omega_{\eps}$, $\Omega_{\eps} \Omega_0$ and~$\Omega_{\eps}$.
},
then $\Omega_0$ is idempotent and fixes $(\Omega_\eps)$ on both sides. Indeed,
\[
\Omega_\eps
= \Iota \Omega_\eps
\leq \Omega_0 \Omega_\eps
\leq \Omega_\eps
\quad
\text{and}
\quad
\Omega_\eps
= \Omega_\eps \Iota
\leq \Omega_\eps \Omega_0
\leq \Omega_\eps
\]
so $\Omega_0 \Omega_\eps = \Omega_\eps \Omega_0 = \Omega_\eps$ for all $\eps \geq 0$, and in particular $\Omega_0^2 = \Omega_0$.
\end{vlist}

These are useful clues when seeking such a family.

Let $\cat{P}$ be a preordered set with a superlinear family $\Omega$, and let $\cat{D}$ be an arbitrary category.
\begin{definition}
The \emph{interleaving distance} between $F,G \in \cat{D}^{\cat{P}}$ is
\begin{equation}
\metO(F,G)
=
\inf
\left\{ \eps \in [0, \infty)
\mid
\text{$F,G$ are $\Omega_\eps$-interleaved} \right\}.
\end{equation}
We define $\met^\Omega(F,G) = \infty$ if the set on the right-hand side is empty.
\end{definition}

\begin{theorem}
\label{thm:inter-super}
The interleaving distance $\met = \metO$ is an extended pseudometric on $\cat{D}^{\cat{P}}$.
\end{theorem}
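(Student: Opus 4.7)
The plan is to mirror the proof of Theorem~\ref{thm:inter-sub} exactly, replacing the role of sublinearity of~$\omega$ with the superlinearity property $\Omega_{\eps_1+\eps_2} \geq \Omega_{\eps_1}\Omega_{\eps_2}$. Three things need to be checked: $\met(F,F) = 0$, symmetry $\met(F,G) = \met(G,F)$, and the triangle inequality.

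For $\met(F,F) = 0$, I would note that $F$ is $\Iota$-interleaved with itself (Example~\ref{ex:0-inter}), and since $\Omega_\eps$ is a translation we have $\Iota \leq \Omega_\eps$ trivially, so by Proposition~\ref{prop:monotonicity} (monotonicity), $F$ is $\Omega_\eps$-interleaved with itself for every $\eps \geq 0$. Symmetry is immediate: an $\Omega_\eps$-interleaving is by definition an $(\Omega_\eps, \Omega_\eps)$-interleaving, which is symmetric in $F$ and $G$ (swap $\varphi$ with~$\psi$).

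The main substantive step is the triangle inequality, and this is where the superlinearity condition earns its keep. Suppose $F,G$ are $\Omega_{\eps_1}$-interleaved and $G,H$ are $\Omega_{\eps_2}$-interleaved. By Proposition~\ref{prop:triangle}, $F,H$ are $(\Omega_{\eps_2}\Omega_{\eps_1}, \Omega_{\eps_1}\Omega_{\eps_2})$-interleaved. The superlinearity inequality gives
\[
\Omega_{\eps_2}\Omega_{\eps_1} \leq \Omega_{\eps_1 + \eps_2}
\quad\text{and}\quad
\Omega_{\eps_1}\Omega_{\eps_2} \leq \Omega_{\eps_1 + \eps_2},
\]
so by Proposition~\ref{prop:monotonicity} applied to both translations, $F,H$ are $\Omega_{\eps_1+\eps_2}$-interleaved. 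Hence $\met(F,H) \leq \eps_1 + \eps_2$, and taking infima over $\eps_1 > \met(F,G)$ and $\eps_2 > \met(G,H)$ gives $\met(F,H) \leq \met(F,G) + \met(G,H)$.

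I do not anticipate a genuine obstacle; the only mild subtlety is making sure the infimum argument goes through when one or both of $\met(F,G), \met(G,H)$ equal $\infty$, which is handled by the convention that the infimum of the empty set is~$\infty$. The value of the proof lies in observing that the superlinearity axiom on $\Omega$ was designed precisely to make this triangle-inequality argument work, just as sublinearity of $\omega$ was designed for the sublinear-projection version.
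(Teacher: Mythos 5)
Your proposal is correct and follows the paper's argument step for step: identity via $\Iota$-interleaving and monotonicity, symmetry by definition, and the triangle inequality by combining Proposition~\ref{prop:triangle} with superlinearity and Proposition~\ref{prop:monotonicity} before passing to the infimum. The only cosmetic difference is that the paper observes $F$ is $\Omega_0$-interleaved with itself, whereas you note $F$ is $\Omega_\eps$-interleaved with itself for every $\eps \geq 0$; both conclude $\met(F,F)=0$.
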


\begin{proof}
Clearly $\met$ is symmetric; and $\met(F,F) = 0$ because $F$ is $\Iota$-interleaved, and therefore by Proposition~\ref{prop:monotonicity} $\Omega_0$-interleaved, with itself.
It remains to verify the triangle inequality.
Suppose that $F,G$ are $\Omega_{\eps_1}$-interleaved, and $G,H$ are $\Omega_{\eps_2}$-interleaved.
By Proposition~\ref{prop:triangle} it follows that $F, H$ are $(\Omega_{\eps_2} \Omega_{\eps_1}, \Omega_{\eps_1} \Omega_{\eps_2})$-interleaved.
By Proposition~\ref{prop:monotonicity} and the superlinearity condition, it follows that $F, H$ are $\Omega_{\eps_1+\eps_2}$-interleaved. 
The result follows by letting $\eps_1$ and $\eps_2$ approach their respective infima.
\end{proof}

\begin{example}
\label{ex:real-2}
The maps $\Omega_\eps: t \mapsto t + \eps$ comprise a superlinear family in $\cat{R} = (\R, \leq)$.
Its interleaving distance $\metO$ is the standard one.
\end{example}

\begin{theorem}
\label{thm:super-stability}
Let $F,G \in \cat{D}^{\cat{P}}$ and let $H: \cat{D} \to \cat{E}$. Then $\met(HF,HG) \leq \met(F,G)$.
\end{theorem}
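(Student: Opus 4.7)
The plan is to mirror the proof of Theorem~\ref{thm:sub-stability}, replacing the sublinear projection bookkeeping with the single superlinear family~$\Omega$. The whole argument rests on the functoriality of interleavings (Proposition~\ref{prop:functorial}): whatever natural transformations witness a $(\Gamma,\Kappa)$-interleaving between $F$ and~$G$ can be pushed forward through~$H$ to yield a $(\Gamma,\Kappa)$-interleaving between $HF$ and~$HG$.

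First I would fix any $\eps \geq 0$ such that $F,G$ are $\Omega_\eps$-interleaved, i.e.\ $(\Omega_\eps,\Omega_\eps)$-interleaved in~$\cat{D}^{\cat{P}}$. Applying Proposition~\ref{prop:functorial} with the functor $H:\cat{D}\to\cat{E}$, I conclude that $HF,HG$ are also $(\Omega_\eps,\Omega_\eps)$-interleaved in~$\cat{E}^{\cat{P}}$, that is, $\Omega_\eps$-interleaved with respect to the same superlinear family on~$\cat{P}$. Hence $\metO(HF,HG) \leq \eps$.

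Taking the infimum over all $\eps$ for which $F,G$ are $\Omega_\eps$-interleaved then gives $\metO(HF,HG) \leq \metO(F,G)$, with the convention that the inequality is vacuous when $\metO(F,G) = \infty$. There is essentially no obstacle here; the theorem is a direct consequence of functoriality combined with the definition of~$\metO$, exactly as Theorem~\ref{thm:sub-stability} was an immediate consequence of functoriality combined with the definition of~$\meto$. The only thing to be careful about is remembering that here the two translations in the interleaving are literally the same object $\Omega_\eps$, so no extra argument about pairs of $\eps$-translations is needed.
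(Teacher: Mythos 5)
Your proof is correct and follows exactly the paper's approach: apply Proposition~\ref{prop:functorial} to transport each $\Omega_\eps$-interleaving of $F,G$ to an $\Omega_\eps$-interleaving of $HF,HG$, then pass to the infimum over~$\eps$. The paper's proof is just a one-line compression of the same argument.
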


\begin{proof}
If $F,G$ are $\Omega_\eps$-interleaved then $HF,HG$ are $\Omega_\eps$-interleaved, by functoriality.
\end{proof}

A comparison of Examples \ref{ex:real-Omega3} and~\ref{ex:real-2} suggests how to get the best of both worlds.

\begin{theorem}
\label{thm:equality}
Let $\omega$ be a sublinear projection on a preordered set~$\cat{P}$. Suppose for every $\eps \geq 0$ there exists a translation $\Omega_\eps$ with $\omega_{\Omega_\eps} \leq \eps$,
which is `largest' in the sense that $\omega_\Gamma \leq \eps$ implies $\Gamma \leq \Omega_\eps$.
Then $(\Omega_\eps)$ is a superlinear family, and the two interleaving distances are equal: $\meto = \metO$.
\end{theorem}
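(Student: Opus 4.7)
The plan has two parts, corresponding to the two conclusions of the theorem.

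First I would verify that $(\Omega_\eps)$ is a superlinear family. Fix $\eps_1, \eps_2 \geq 0$. By sublinearity of $\omega$, together with the hypothesis $\omega_{\Omega_{\eps_i}} \leq \eps_i$, I obtain
\[
\omega_{\Omega_{\eps_1}\Omega_{\eps_2}} \leq \omega_{\Omega_{\eps_1}} + \omega_{\Omega_{\eps_2}} \leq \eps_1 + \eps_2.
\]
Now I invoke the maximality clause applied to $\Gamma = \Omega_{\eps_1}\Omega_{\eps_2}$ with $\eps = \eps_1 + \eps_2$: this forces $\Omega_{\eps_1}\Omega_{\eps_2} \leq \Omega_{\eps_1+\eps_2}$, which is exactly the superlinearity condition.

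Next I would show $\meto = \metO$ by proving each is bounded above by the other. For $\meto \leq \metO$, suppose $F,G$ are $\Omega_\eps$-interleaved. Since $\Omega_\eps$ itself is a translation with $\omega_{\Omega_\eps} \leq \eps$, this is an interleaving by a pair of $\eps$-translations, so $F,G$ are $\eps$-interleaved with respect to $\omega$; taking infima yields the inequality. For the reverse $\metO \leq \meto$, suppose $F,G$ are $\eps$-interleaved with respect to $\omega$, so they are $(\Gamma,\Kappa)$-interleaved for some pair with $\omega_\Gamma, \omega_\Kappa \leq \eps$. The maximality clause gives $\Gamma \leq \Omega_\eps$ and $\Kappa \leq \Omega_\eps$, and Proposition~\ref{prop:monotonicity} then upgrades this to a $(\Omega_\eps,\Omega_\eps)$-interleaving, i.e.\ an $\Omega_\eps$-interleaving. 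Passing to infima completes the argument.

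The whole proof is really just a careful unwinding of the definitions together with the two universal properties of $\Omega_\eps$ (it has projection at most $\eps$, and it dominates every other such translation); there is no genuine obstacle. The only minor thing to be careful about is that the interleaving definition allows the two translations to differ, so one must apply maximality to \emph{both} $\Gamma$ and $\Kappa$ in the reverse direction, and then use Proposition~\ref{prop:monotonicity} to replace them simultaneously by $\Omega_\eps$. No extra hypothesis such as monotonicity of $\omega$ is needed, because the maximality clause in the theorem statement already plays the role that monotonicity would play elsewhere.
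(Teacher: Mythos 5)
Your proof is correct and follows essentially the same route as the paper's: sublinearity plus the maximality clause gives superlinearity, and the same maximality clause plus Proposition~\ref{prop:monotonicity} gives the equivalence between $\eps$-interleaving with respect to $\omega$ and $\Omega_\eps$-interleaving, from which the equality of metrics follows by taking infima.
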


\begin{proof}
For superlinearity, we have
\[
\omega_{\Omega_{\eps_1}\Omega_{\eps_2}}
\leq
\omega_{\Omega_{\eps_1}}
+
\omega_{\Omega_{\eps_2}}
\leq
\eps_1 + \eps_2
\]
which implies that $\Omega_{\eps_1}\Omega_{\eps_2} \leq \Omega_{\eps_1 + \eps_2}$ 
by the `largest' property.
To compare distances, notice that the following statements are equivalent:
\begin{align*}
&\text{$F,G$ are $(\Gamma,\Kappa)$-interleaved for some $\Gamma, \Kappa$ such that $\omega_\Gamma, \omega_\Kappa \leq \eps$}
\\
\;\Leftrightarrow\;\;
&\text{$F,G$ are $(\Omega_\eps, \Omega_\eps)$-interleaved}
\end{align*}
The implication `$\Leftarrow$' is trivial, and the implication `$\Rightarrow$' follows by Proposition~\ref{prop:monotonicity} and the fact that $\Omega_\eps$ is `largest'. The equality $\meto = \metO$ follows upon taking the infimum over the set of~$\eps$ for which each statement is true.
\end{proof}

The relationship between $\omega, \Omega$ can be understood at a more abstract level. One clue is that the inequality $\omega_{\Omega_{\eps}} \leq  \eps$ in the statement of Theorem~\ref{thm:equality} takes the form of a co-unit of an adjunction. This suggests that we can construct either of $\omega, \Omega$ from the other using a version of the Adjoint Functor Theorem in category theory.
We explore this in Section~\ref{subsec:monoids}.

We finish Section~\ref{sec:interleaving} with an observation. If $\cat{P}$ is a preordered set equipped with $\Omega$ or~$\omega$, then our results combine to give a functor ${[\cdot]}^\cat{P}: \cat{Cat} \to \cat{Met}$, where $\cat{Cat}$ is the category of small categories and functors, and $\cat{Met}$ is the category of extended pseudometric spaces and 1-Lipschitz maps.

\section{Functions into a metric space}
\label{sec:map-to-metric}

\subsection{Subsets of a metric space}
\label{sec:subsets}

Let $(M, d_M)$ be a metric space, or more generally a Lawvere metric space, and let $\Subs_M$ denote the poset of subsets of~$M$. By considering generalized persistence modules over subposets $\cat{P} \subseteq \Subs_M$, we recover several standard theories of persistence.

A natural Lawvere metric on $\Subs_M$ is the asymmetric Hausdorff distance
\[
\antihaus(A,B)
=
\sup_{b \in B} \left[ \inf_{a \in A} \left( d_M(a,b) \right)\right]
\]
which measures how far $B$ escapes from~$A$. 
The two Lawvere conditions $\antihaus(A,A)=0$ and $\antihaus(A,C) \leq \antihaus(A,B) + \antihaus(B,C)$ follow from the corresponding conditions on~$d_M$.

More familiar, perhaps, is the (symmetric) Hausdorff distance
\[
\haus(A,B) = \max(\ell_{\rm H}(A,B), \ell_{\rm H}(B,A))
\]
on $\Subs_M$.

Let $\cat{P} \subseteq \Subs_M$ be a subposet. The method of Section~\ref{sec:lawvere} provides sublinear projections
\begin{alignat*}{2}
\omega^\ell_\Gamma
&= \sup \left( \antihaus(A, \Gamma(A)) \mid A \in \cat{P} \right)
\\
\omega^d_\Gamma
&= \sup \left( \haus(A, \Gamma(A)) \mid A \in \cat{P} \right)
\end{alignat*}
on $\cat{Trans_P}$, and hence interleaving distances on any $\cat{D}^\cat{P}$.

The next proposition saves us from having to choose.

\begin{proposition}
The sublinear projections $\omega^\ell, \omega^d$ are equal.
\end{proposition}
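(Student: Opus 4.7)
The plan is to reduce the symmetric Hausdorff distance $\haus(A, \Gamma(A))$ to the asymmetric one $\antihaus(A, \Gamma(A))$ by exploiting the fact that every translation $\Gamma$ satisfies $A \leq \Gamma(A)$, which in $\Subs_M$ means $A \subseteq \Gamma(A)$. Once this pointwise equality is established, taking the supremum over $A \in \cat{P}$ gives the equality of the two sublinear projections directly.

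First, fix $\Gamma \in \Trans_\cat{P}$ and $A \in \cat{P}$. The translation property gives $A \subseteq \Gamma(A)$, so every $a \in A$ lies in $\Gamma(A)$. Therefore
\[
\antihaus(\Gamma(A), A) = \sup_{a \in A} \inf_{b \in \Gamma(A)} d_M(b, a) \leq \sup_{a \in A} d_M(a,a) = 0,
\]
using $a$ itself as a witness $b \in \Gamma(A)$. (If $\cat{P}$ contains the empty set, this sup is vacuously $0$ anyway.) Consequently $\haus(A,\Gamma(A)) = \max(\antihaus(A,\Gamma(A)), \antihaus(\Gamma(A), A)) = \antihaus(A, \Gamma(A))$.

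Taking the supremum over all $A \in \cat{P}$ yields $\omega^d_\Gamma = \omega^\ell_\Gamma$, and since $\Gamma$ was arbitrary, $\omega^d = \omega^\ell$ on all of $\Trans_\cat{P}$.

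The argument is essentially a one-line observation, so there is no real obstacle; the only thing to be careful about is the direction of the asymmetric Hausdorff distance (which way the sup and inf go). The key conceptual point worth flagging is that the equality depends crucially on translations being \emph{inflationary} ($A \subseteq \Gamma(A)$): this forces one of the two asymmetric Hausdorff terms to vanish, collapsing the symmetric distance to the asymmetric one.
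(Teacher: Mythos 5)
Your proof is correct and follows essentially the same route as the paper's: observe that $A \subseteq \Gamma(A)$ forces $\antihaus(\Gamma(A),A)=0$, so the symmetric Hausdorff distance collapses to the asymmetric one, and then take suprema. You simply spell out the $\sup$--$\inf$ computation that the paper leaves implicit.
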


\begin{proof}
For any $A \subseteq B$ we have $\antihaus(B,A) = 0$ and therefore $\haus(A,B) = \antihaus(A,B)$.
Since $A \subseteq \Gamma(A)$ it follows that $\haus(A,\Gamma(A)) = \antihaus(A, \Gamma(A))$ for all~$A$.
\end{proof}

The interleaving distance defined by the sublinear projection $\omega = \omega^\ell = \omega^d$ is called the \emph{induced metric} on $\cat{D}^\cat{P}$; specifically, it is induced by~$d_M$. It is an extended pseudometric.

There is a largest set $B$ such that $\antihaus(A,B) \leq \eps$. This is the \emph{$\eps$-offset of~$A$}, defined
\[
A^\eps
=
\left\{ m \in M \mid {\textstyle \inf_{a \in A}} (d_M(a,m)) \leq \eps \right\}
=
\left\{ m \in M \mid \antihaus(A, \{m\}) \leq \eps \right\}.
\]
The operation $\Omega_\eps: A \mapsto A^\eps$ is a translation on $\Subs_M$, and Theorem~\ref{thm:equality} implies that $(\Omega_\eps)$ is a superlinear family and $\met^\Omega = \met^\omega$.

\subsection{Inverse-image diagrams}
\label{sec:inverse-image}
Let $X,M$ be sets. Then any function $f: X \to M$ induces a map of posets
$\pre{f}: \Subs_M \to \Subs_X$
\[
\pre{f}(A) = \left\{ x \in X \mid f(x) \in A \right\},
\]
the \emph{inverse-image} map. This is order-preserving, so it is a functor.

Fix a collection of subsets $\cat{P} \subseteq \Subs_M$, and let $X$ be a topological space. Then $f$ induces a functor
\[
\raisebox{-0.2ex}{$F$ :}
\begin{diagram}
\node{\cat{P}}
	\arrow{e,t}{\subseteq}
\node{\Subs_M}
	\arrow{e,t}{\pre{f}}
\node{\Subs_X}
	\arrow{e,b}{}
\node{\cat{Top}}
\end{diagram}
\]
where the last arrow is the functor that `remembers' the subspace topology inherited by each subset of~$X$. (It is a functor because the inclusion of one subspace in another is continuous.)

We can think of this process as a function $\Inv: M^X \to \cat{Top}^{\cat{P}}$ which takes the function~$f$ to its \emph{inverse-image diagram}~$F$, a generalized persistence module in~$\cat{Top}$ over~$\cat{P}$.

Let $d_M$ be a Lawvere metric on~$M$. Then:

\begin{vlist}
\item
$d_M$ induces an extended pseudometric $\met_\infty$ on $M^X$, by the formula
\begin{align*}
\met_\infty(f,g)
&= \max ( \hat\met(f,g), \hat\met(g,f) )
\\
\text{where}\quad
\hat{\met}_\infty(f,g)
&= \sup \left( d_M(f(x), g(x)) \mid x \in X \right)
\end{align*}
for functions $f,g: X \to M$.

\item
$d_M$ induces an extended pseudometric $\met = \met^\omega$ on $\cat{Top}^{\cat{P}}$ (Section~\ref{sec:subsets}). 
\end{vlist}

How does $\Inv$ behave with respect to these two metrics?

\begin{definition}
\label{def:enough}
We say that $\cat{P} \subseteq \Subs_M$ has \emph{enough translations} if for every $0 \leq \eps < \eta$ there exists $\Gamma = \Gamma_{\eps, \eta} \in \cat{Trans_P}$ with $\omega_{\Gamma} \leq \eta$, such that for every $A \in \cat{P}$ we have an inclusion
\[
\hat{A}^\eps
=
\{ m \in M \mid \text{there exists $a \in A$ such that $d_M(a,m) \leq \eps$} \}
\,\subseteq\,
\Gamma(A).
\]
We call $\hat{A}^\eps$ the \emph{weak $\eps$-offset} of~$A$. It is contained in the offset~$A^\eps$ and not always equal to it.
\end{definition}

If $\cat{P}$ is closed under taking $\eps$-offsets then it has enough translations; let $\Gamma_{\eps,\eta} = (A \mapsto A^\eps)$.
Examples include the following:
\begin{vlist}
\item
$\cat{P} = \Subs_M$.

\item
$\cat{P} = \{ \text{closed subsets of~$M$} \}$ when $M$ is a metric space.

\item
$\cat{P} = \{ \text{compact subsets of~$M$} \}$ when $M$ is a proper\footnote{%
A metric space is `proper' if its closed bounded subsets are compact.
}
metric space.
\end{vlist}
Similarly, if $\cat{P}$ is closed under taking weak $\eps$-offsets then it has enough translations; let $\Gamma_{\eps,\eta} = (A \mapsto \hat{A}^\eps)$.

Here is an example where the $\eta$~parameter comes into play.
\begin{vlist}
\item
$\cat{P} = \{ \text{open subsets of~$M$} \}$ when $M$ is a metric space.
\end{vlist}
Let $\Gamma_{\eps,\eta} = (A \mapsto \intr(A^\eta))$.
Observe that $A^\eta$ contains an open neighbourhood of $\hat{A}^\eps$, namely the union of open $(\eta-\eps)$-balls centered at the points of~$\hat{A}^\eps$. It follows that $\intr(A^\eta)$ contains this open set and therefore $\hat{A}^\eps$, as required.

The reader may verify that there is no guaranteed inclusion $\hat{A}^\eps \subseteq \intr (A^\eps)$.

\begin{theorem}[Inverse-image stability]
\label{thm:inv-stab}
Let $X$ be a topological space, and let $(M, d_M)$ be a Lawvere metric space.
Suppose that $\cat{P} \subseteq \Subs_M$ has enough translations.
Then for any $f, g: X \to M$ and $F = \Inv(f)$, $G = \Inv(g)$, we have
\[
\met(F,G) \leq \met_\infty(f,g).
\]
That is, $\Inv: M^X \to \cat{Top}^\cat{P}$ is $1$-Lipschitz (non-expanding).
\end{theorem}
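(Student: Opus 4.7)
Set $\eps = \met_\infty(f,g)$. The strategy is to show that for every $\delta > \eps$ the modules $F$ and $G$ are $\delta$-interleaved with respect to the sublinear projection~$\omega$; taking the infimum over such $\delta$ then yields $\met(F,G) \leq \eps$, as required.

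Fix $\delta > \eps$. By the hypothesis that $\cat{P}$ has enough translations, there exists a translation $\Gamma = \Gamma_{\eps,\delta} \in \Trans_\cat{P}$ with $\omega_\Gamma \leq \delta$ such that $\hat{A}^\eps \subseteq \Gamma(A)$ for every $A \in \cat{P}$. The goal is now to exhibit a $(\Gamma, \Gamma)$-interleaving between $F$ and $G$, and then to invoke the definition of $\delta$-interleavedness.

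To construct $\varphi$, fix $A \in \cat{P}$. The bound $\met_\infty(f,g) \leq \eps$ gives $d_M(f(x), g(x)) \leq \eps$ for every $x \in X$, so if $x \in f^{-1}(A)$ then $f(x) \in A$ witnesses $g(x) \in \hat{A}^\eps \subseteq \Gamma(A)$, i.e.\ $x \in g^{-1}(\Gamma(A))$. This produces a set-theoretic inclusion $f^{-1}(A) \subseteq g^{-1}(\Gamma(A))$ inside $X$, continuous as a map of subspaces, which we take to be $\varphi_A : F(A) \to G(\Gamma(A))$. Swapping the roles of $f$ and $g$ (using $d_M(g(x), f(x)) \leq \eps$) defines $\psi_A : G(A) \to F(\Gamma(A))$ the same way.

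Verifying the interleaving axioms is essentially free of content: every morphism in the naturality squares~\eqref{eq:nat} and the interleaving triangles~\eqref{eq:int} is a subspace inclusion between two subsets of $X$, and any two such inclusions with the same domain and codomain agree, so all diagrams commute in $\cat{Top}$. Hence $F$ and $G$ are $\Gamma$-interleaved and thus $\delta$-interleaved with respect to~$\omega$, completing the argument after letting $\delta \downarrow \eps$. The only subtle point is that the argument supplies only the weak offset inclusion $g(x) \in \hat{A}^\eps$ (one gets a specific witness $a = f(x) \in A$ within distance $\eps$, not control over the full offset $A^\eps$); this is precisely why the definition of `enough translations' is stated with slack $\eps < \delta$ against $\hat{A}^\eps$ rather than $A^\eps$, and the examples following Definition~\ref{def:enough} confirm that this flexibility is genuinely needed in cases such as $\cat{P} = \{\text{open subsets of } M\}$.
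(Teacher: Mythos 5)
Your proof is correct and follows essentially the same route as the paper's: both derive the inclusions $f^{-1}(A) \subseteq g^{-1}(\Gamma(A))$ and $g^{-1}(A) \subseteq f^{-1}(\Gamma(A))$ from the $\met_\infty$ bound and the weak-offset property of $\Gamma = \Gamma_{\eps,\delta}$, and both rely on the observation that diagrams of subspace inclusions commute automatically. The only cosmetic difference is that the paper factors through Lemma~\ref{lem:poset} applied to the functors $\cat{P} \to \Subs_X$ and then invokes Proposition~\ref{prop:functorial}, whereas you verify commutativity directly in $\cat{Top}$; these are the same argument packaged differently.
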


\begin{proof}
Write $\eps = \met_\infty(f,g)$. Let $\eta > \eps$ and take a corresponding~$\Gamma = \Gamma_{\eps,\eta}$.
Then
\begin{alignat*}{2}
f(x) \in A
&\; \To \;
g(x) \in \hat{A}^\eps
&&\; \To \;
g(x) \in \Gamma(A)
\\
g(x) \in A
&\; \To \;
f(x) \in \hat{A}^\eps
&&\; \To \;
f(x) \in \Gamma(A)
\end{alignat*}
where the first~$\To$ follow from $\met_\infty(f,g) = \eps$ and the second~$\To$ follow from the choice of~$\Gamma$.
The resulting inclusions
\[
\pre{f}(A) \subseteq \pre{g} \Gamma(A),
\quad
\pre{g}(A) \subseteq \pre{f} \Gamma(A)
\]
imply that the functors
\[
\begin{diagram}
\node{\cat{P}}
	\arrow{e,t}{\subseteq}
\node{\Subs_M}
	\arrow{e,t}{\pre{f}}
\node{\Subs_X}
\\
\node{\cat{P}}
	\arrow{e,t}{\subseteq}
\node{\Subs_M}
	\arrow{e,t}{\pre{g}}
\node{\Subs_X}
\end{diagram}
\]
are $\Gamma$-interleaved, by Lemma~\ref{lem:poset}.
Therefore $F, G$ are $\Gamma$-interleaved, by Proposition~\ref{prop:functorial}.
Thus $\met(F,G) \leq \omega_\Gamma \leq \eta$, and the result follows by letting $\eta \to \eps$.
\end{proof}

\begin{remark*}
Suppose $\Gamma = \Gamma_{\eps,\eta}$ can be chosen independently of $\eta$. This implies $\omega_\Gamma \leq \eps$, and we get the sharper conclusion that $F,G$ are $\eps$-interleaved, not merely $\eta$-interleaved for all $\eta > \eps$.
For instance, this happens when $\cat{P}$ is closed under $\eps$-offsets.\end{remark*}

Combining Theorems \ref{thm:inv-stab} and~\ref{thm:sub-stability}:

\begin{theorem}[inverse-image stability]
\label{thm:H-inv-stab}
Let $H: \cat{Top} \to \cat{E}$ be any functor. Then
\[
\met(HF,HG) \leq \met_\infty(f,g)
\]
in the situation of Theorem~\ref{thm:inv-stab}.
\qed
\end{theorem}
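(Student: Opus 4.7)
The plan is to simply concatenate the two inequalities supplied by the cited theorems. The metric on $\cat{Top}^{\cat{P}}$ is the one induced by the Lawvere metric $d_M$ via the sublinear projection $\omega$ of Section~\ref{sec:subsets}, and the metric on $\cat{E}^{\cat{P}}$ is the analogous one defined from the same sublinear projection $\omega$ on $\cat{Trans}_{\cat{P}}$; crucially, both distances are built from the \emph{same} preordered set $\cat{P}$ and the \emph{same} $\omega$, so Theorem~\ref{thm:sub-stability} applies verbatim to the post-composition functor $H^{\cat{P}} \colon \cat{Top}^{\cat{P}} \to \cat{E}^{\cat{P}}$.

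First I would invoke Theorem~\ref{thm:inv-stab}, which under the standing hypothesis that $\cat{P} \subseteq \Subs_M$ has enough translations gives
\[
\met(F,G) \leq \met_\infty(f,g).
\]
Next I would invoke Theorem~\ref{thm:sub-stability} applied to the functor $H \colon \cat{Top} \to \cat{E}$ and the persistence modules $F,G \in \cat{Top}^{\cat{P}}$, yielding
\[
\met(HF,HG) \leq \met(F,G).
\]
Chaining these two inequalities produces the desired bound $\met(HF,HG) \leq \met_\infty(f,g)$.

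There is no real obstacle here: the whole point of isolating the soft stability machinery in Sections~\ref{sec:interleaving} and~\ref{sec:map-to-metric} was to make composite statements like this one follow by pure functoriality, without needing any further analysis of $H$, $f$, or $g$. The only thing to be mildly careful about is that both appeals to the previous theorems use the same interleaving distance $\met = \met^\omega$ on the respective functor categories, which is automatic from the construction in Section~\ref{sec:subsets}. The proof is therefore a one-line citation, and I would present it as such.
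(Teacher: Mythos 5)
Your argument is exactly the paper's: the authors introduce the theorem with the phrase ``Combining Theorems~\ref{thm:inv-stab} and~\ref{thm:sub-stability}'' and supply no further proof, relying on precisely the chain of inequalities you write down. Your observation that both inequalities are taken with respect to the same $\met = \met^\omega$ on $\cat{D}^{\cat{P}}$ is the right (and only) point to be careful about, and it is correct.
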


\subsection{Inverse-image persistence theories}
\label{sec:inverse-image-theories}

We give several examples of persistence theories which fit into the framework developed in sections \ref{sec:subsets}--\ref{sec:inverse-image}. For the familiar forms of these theories, suppose that $H$ is a homology functor $\cat{Top} \to \Vect_\F$.

\smallskip
\begin{example}[sublevelset persistence~\cite{CohenSteiner_E_H_2007,Chazal_CS_G_G_O_2009,bubenikScott:1}]
\label{ex:sublevelset}
Let the target space be $\R$ with the standard metric, and let $\cat{P}$ be the poset of intervals of the form $I^t = (-\infty, t]$ where $t \in \R$.
If $f : X \to\R$ then $F = \Inv(f)$ is the sublevelset filtration of~$X$, and $HF$ is the sublevelset persistent homology.
Since
\[
[I^t]^\eps = I^{t+\eps}
\]
it follows that $\cat{P}$ has enough translations and $\norm{f - g}_\infty \leq \eps$ implies that $HF, HG$ are $\eps$-interleaved.

We normally identify the poset $\cat{P}$ with the real line~$\R$ in the obvious way. If we extend $\cat{P}$ to include the interval $(-\infty, +\infty)$, then the identification is with $\R_+ = \R \cup \{ +\infty\}$.
\end{example}

\smallskip
\begin{example}[multidimensional sublevelset persistence~\cite{carlssonZomorodian:multidimP,Frosini:2013}] \label{ex:multi}
\label{ex:multi-d}
Let the target space be $\R^n$ with the $\ell_\infty$ metric, and let $\cat{P}$ be the poset of lower quadrants
\[
Q^{\vect{a}}
=
(-\infty, a_1] \times \cdots \times (-\infty, a_n]
\]
where $\vect{a} = (a_1, \dots, a_n) \in \R^n$.
If $\vect{f}: X \to \R^n$ then $F = \Inv(\vect{f})$ is a multi-filtration of~$X$, and $HF$ is the multi-sublevelset persistent homology.
Note that
\[
[Q^{\vect{a}}]^\eps
=
Q^{\vect{a}+\eps}
\]
where $\vect{a} + \eps = (a_1+\eps, \dots, a_n + \eps)$, so $\cat{P}$ has enough translations and $\norm{\vect{f} - \vect{g}}_\infty \leq \eps$ implies that $HF, HG$ are $\eps$-interleaved. Here $\norm{ \cdot }_\infty$ denotes the supremum over the domain of the function of the $\ell^\infty$-norm in~$\R^n$.

As a poset, $\cat{P}$ is isomorphic to $\R^n$ with the standard partial order.
If we allow $a_i = +\infty$, then the identification is with
$\R_+^n = (\R_+)^n$.
See Section~\ref{subsec:multi-d} for more ways to handle multidimensional persistence modules.
\end{example}

\smallskip
\begin{example}[levelset
  persistence~\cite{Carlsson_dS_M_2009,Bendich:2013}]
\label{ex:levelset}
Let the target space be $\R$ with the standard metric, and let $\cat{P}$ be the closed bounded intervals.
If $f:X \to \R$ then $F=\Inv(f)$ is the levelset diagram of~$f$, and $HF$ is the levelset persistent homology.
Note that
\[
[a,b]^\eps = [a-\eps, b+\eps]
\]
so $\cat{P}$ has enough translations and $\norm{ f - g }_\infty \leq \eps$ implies that $HF,HG$ are $\eps$-interleaved.

Here $\cat{P}$ may be identified with the planar region $\{ y \geq x \}$ and its nonstandard partial order
\[
(x_1,y_1) \leq (x_2, y_2)
\;\;
\Leftrightarrow
\;\;
\text{$x_1 \geq x_2$ and $y_1 \leq y_2$}.
\]
We can extend~$\cat{P}$ to include infinite intervals by allowing $x = -\infty$ and $y = +\infty$.
\end{example}

\smallskip 
\begin{example}[circular
  persistence~\cite{Dey_Burghelea_2011,Burghelea-Haller:2013}]
Let the target space be $\mathbb{S}^1 = \R/2\pi\Z$ with the metric inherited from $\R$. Let $\cat{P}$ be the poset of closed path-connected subsets of~$\mathbb{S}^1$.
If $f:X \to \mathbb{S}^1$ then $F=\Inv(f)$ is the circular levelset diagram of~$f$, and $HF$ is the angle-valued levelset persistent homology.
As with the previous examples, $\cat{P}$ is closed under taking $\eps$-offsets and $\met_\infty(f,g) \leq \eps$ implies that $HF, HG$ are $\eps$-interleaved.
\end{example}

\smallskip
\begin{example}[copresheaves on a metric space~\cite{curry:cosheaves}]
\label{ex:copresheaves}
Let $M$ be a metric space and let $\cat{P}$ be the poset of open subsets of~$M$. If $f:X \to M$ and $F=\Inv(f)$ then $HF$ is a copresheaf on~$M$.
Since $\cat{P}$ has enough translations, $\met_\infty(f,g) \leq \eps$ implies $\met(HF,HG) \leq \eps$.
\end{example}

We remind the reader that the bounds $\met(HF,HG)$ in the examples above are `soft' stability theorems, bounding the interleaving distance. 

The corresponding `hard' stability theorem for sublevelset persistence is well known. Let $X$ be a finite polyhedron and $f$ be continuous. Then the sublevelset persistent homology $HF$ is sufficiently tame that its persistence diagram may be defined. The interleaving distance between $HF,HG$ is then an upper bound for the bottleneck distance between the corresponding persistence diagrams~\cite{CohenSteiner_E_H_2007,Chazal_CS_G_G_O_2009,Chazal_dS_G_O_2012}.

In general, the question of a hard stability theorem arises for any choice of invariant descriptor for~$HF$ and a metric that compares different possible values of the descriptor.

\section{Monoidal structures}
\label{sec:monoids}

In Sections \ref{sec:sublinear} and~\ref{sec:superlinear} we showed that we could define an interleaving distance by comparing the monoid $\Trans_\cat{P}$ with the monoid $[0,\infty]$ by a sublinear projection $\omega:\Trans_{\cat{P}} \to [0,\infty]$ or with the monoid $[0,\infty)$ by a superlinear family $\Omega:[0,\infty) \to \Trans_{\cat{P}}$. In Section~\ref{subsec:monoids} we observe that $\omega$ and $\Omega$ are dual in a precise categorical sense. From this observation many of their properties follow easily. 

With this level of abstraction in hand, it is clear what is needed to
replace the monoids $[0,\infty]$ and $[0,\infty)$ in order to obtain other ways of
measuring interleavings. In Section~\ref{subsec:multi-d} we compare $\Trans_{\cat{P}}$ with
$[0,\infty]^n$ and $[0,\infty)^n$ and see that the resulting `vector
persistence' is stable.

\subsection{Monoidal adjunctions}
\label{subsec:monoids}

It is perhaps easier to work with a superlinear family than with a sublinear projection, since the latter entails considering the set of all possible translations rather than one specific family of translations.
On the other hand, we have seen that sublinear projections occur in a very natural way.
What is the relationship between the two methods?
In this subsection we give a brief high-level description of what is going on.

It turns out that the language of monoidal categories~\cite{maclane:book,kelly:doctrinal-adjunction} provides a pleasing interpretation of sublinear projections and superlinear families and their relationship to each other.

First we observe that $\cat{Trans_{\cat{P}}}$ is a strict monoidal category. Indeed it is a category since it is a preordered set. The tensor product is given by composition. We have $\Gamma \tensor \Kappa = \Gamma \Kappa$ and $(\Gamma \leq \Kappa) \tensor (\Lambda \leq \Mu) = \Gamma \Lambda \leq \Kappa \Mu$. The tensor unit is the identity $\Id \in \cat{\Trans_{\cat{P}}}$.
Since $\cat{Trans_{\cat{P}}}$ is a thin category all of the relations in the definition hold automatically.

Similarly, both $[0,\infty)$ and $[0,\infty]$ are strict monoidal categories with tensor product given by addition, $a \tensor b = a+b$ and $(a \leq b) \tensor (c \leq d) = a+b \leq c+d$, and tensor unit the number~$0$.

Now one can check that a superlinear family $\Omega: [0,\infty) \to \cat{Trans_{\cat{P}}}$ corresponds exactly to a lax monoidal functor $\Omega: ([0,\infty),+,0) \to (\cat{Trans_{\cat{P}}},\circ,\Id)$.
Indeed, monotone corresponds to functorial, and superlinearity
corresponds to the monoidal coherence maps. Note that $\Id \leq \Omega_0$ by definition.

In the other direction, a monotone sublinear projection $\omega: \cat{Trans_P} \to [0,\infty]$ corresponds exactly to an oplax monoidal functor $\omega: (\cat{Trans_P}, \circ, \Id) \to ([0,\infty],+,0)$.
Indeed, monotone corresponds to functorial, and the identity and
sublinearity conditions correspond to the coherence maps.

(There is no loss of generality in supposing that our sublinear projections are monotone, since we can always replace a sublinear projection by its monotone hull without changing the interleaving distance.)

\begin{definition}[adjunction relation]
Let $\omega: \cat{Trans_P} \to [0,\infty]$ and $\Omega: [0,\infty) \to \cat{Trans_P}$ be arbitrary functions.
We write $\omega \dashv \Omega$ if
\begin{equation}
\label{eq:adjoint-rel}
	\omega_{\Gamma} \leq \eps
	\;\Leftrightarrow\;
	\Gamma \leq \Omega_{\eps}
\end{equation}
holds for all $\eps \in [0,\infty)$ and $\Gamma \in \cat{Trans_{P}}$.
\end{definition}

For example, in the situation of Theorem~\ref{thm:equality}, if $\omega$ is monotone then $\omega \dashv \Omega$.

What we have is very nearly the notion of an adjoint pair of functors between the categories $\cat{Trans_P}$ and $[0,\infty]$.
In order theory, this is called a monotone Galois connection~\cite{continuousLatticesAndDomains}.
We say `very nearly' because the domain of~$\Omega$ does not
include~$\infty$. This can be fixed by appending a terminal object to
$\cat{Trans_P}$ if necessary and defining $\Omega_\infty$ to be this
terminal object.

As we see now, the adjunction relation guarantees that $\omega, \Omega$ are functors (i.e.\ are monotone), and that $\omega$ is lax monoidal if and only if $\Omega$ is oplax monoidal.

\begin{theorem}
\label{thm:adjoint-equality}
Let $\omega: \cat{Trans_P} \to [0,\infty]$ and $\Omega: [0,\infty) \to \cat{Trans_P}$ be arbitrary functions such that $\omega \dashv \Omega$.
Then:
\begin{vlist}
\item[(1)]
The `unit' inequality $\Gamma \leq \Omega_{\omega_\Gamma}$ holds for all $\Gamma \in \cat{Trans_P}$.

\item[(2)]
The `co-unit' inequality $\omega_{\Omega_\eps} \leq \eps$ holds for all $\eps \in [0, \infty)$.

\item[(3)]
The map $\omega$ is monotone.

\item[(4)]
The map $\Omega$ is monotone.
\end{vlist}
The next two statements are equivalent to each other.
\begin{vlist}
\item[(5)]
The map $\omega$ is a sublinear projection.
\item[(6)]
The map $\Omega$ is a superlinear family.
\end{vlist}
Suppose (5), (6) are true, so that the two interleaving metrics are defined.
\begin{vlist}
\item[(7)]
The metrics are equal: $\met^\omega = \met^\Omega$.
\end{vlist}
\end{theorem}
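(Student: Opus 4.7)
The plan is to treat the whole theorem as a formal consequence of the adjunction relation $\omega \dashv \Omega$, imitating the standard passage from a Galois connection to its unit and counit and then to preservation of the monoidal structure on each side.

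First, parts (1)--(4) are purely formal. For the unit (1), I would apply \eqref{eq:adjoint-rel} with $\eps = \omega_\Gamma$: the tautology $\omega_\Gamma \leq \omega_\Gamma$ translates across the adjunction to $\Gamma \leq \Omega_{\omega_\Gamma}$, with the convention (flagged in the discussion preceding the theorem) that $\Omega_\infty$ is a terminal object of $\cat{Trans_P}$ so as to cover the case $\omega_\Gamma = \infty$. Part (2) is dual: \eqref{eq:adjoint-rel} applied with $\Gamma = \Omega_\eps$ to the tautology $\Omega_\eps \leq \Omega_\eps$ gives $\omega_{\Omega_\eps} \leq \eps$. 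Monotonicity of $\omega$ in (3) then follows because $\Gamma \leq \Kappa \leq \Omega_{\omega_\Kappa}$ by the unit, and \eqref{eq:adjoint-rel} converts the composite inequality into $\omega_\Gamma \leq \omega_\Kappa$; symmetrically, $\eps_1 \leq \eps_2$ gives $\omega_{\Omega_{\eps_1}} \leq \eps_2$ by the counit, hence $\Omega_{\eps_1} \leq \Omega_{\eps_2}$, which is (4).

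For the equivalence (5) $\Leftrightarrow$ (6), I would transport the monoidal inequality across the adjunction in each direction. Assuming (5), the counit and sublinearity give $\omega_{\Omega_{\eps_1}\Omega_{\eps_2}} \leq \omega_{\Omega_{\eps_1}} + \omega_{\Omega_{\eps_2}} \leq \eps_1+\eps_2$, so \eqref{eq:adjoint-rel} yields $\Omega_{\eps_1}\Omega_{\eps_2} \leq \Omega_{\eps_1+\eps_2}$, which is superlinearity. Conversely, assuming (6), the unit together with the ordered-monoid property \eqref{eq:ordered-monoid} and superlinearity gives $\Gamma_1\Gamma_2 \leq \Omega_{\omega_{\Gamma_1}}\Omega_{\omega_{\Gamma_2}} \leq \Omega_{\omega_{\Gamma_1}+\omega_{\Gamma_2}}$, whence \eqref{eq:adjoint-rel} delivers $\omega_{\Gamma_1\Gamma_2} \leq \omega_{\Gamma_1}+\omega_{\Gamma_2}$ (trivially true if either $\omega_{\Gamma_i}$ is infinite). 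The normalization $\omega_\Iota = 0$ comes from $\Iota \leq \Omega_0$, which holds because $\Omega_0$ is a translation and hence dominates the identity, combined with monotonicity (3) and the counit $\omega_{\Omega_0} \leq 0$.

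Finally, for (7) I would observe that the two families of witnessing interleavings match up to the monotonicity of interleavings. Any $\Omega_\eps$-interleaving is a $(\Gamma,\Kappa)$-interleaving with $\Gamma = \Kappa = \Omega_\eps$ satisfying $\omega_\Gamma \leq \eps$ by the counit, giving $\meto \leq \metO$. Conversely, if $F,G$ are $(\Gamma,\Kappa)$-interleaved with $\omega_\Gamma,\omega_\Kappa \leq \eps$, then the unit together with monotonicity (4) of $\Omega$ forces $\Gamma,\Kappa \leq \Omega_\eps$, and Proposition~\ref{prop:monotonicity} upgrades the interleaving to an $\Omega_\eps$-interleaving, giving $\metO \leq \meto$. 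I do not foresee any substantial obstacle; the only bookkeeping subtlety is the treatment of infinite values of $\omega$, handled once and for all by the conventions $\Omega_\infty = \top$ and $a+\infty = \infty$, after which each step is a one-line application of \eqref{eq:adjoint-rel}.
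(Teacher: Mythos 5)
The paper leaves this proof to the reader, saying only to compare Theorem~\ref{thm:equality}; your argument is exactly the intended one, carrying out the same Galois-connection manipulations (unit, counit, monotonicity by transposition, transport of the monoidal inequality, and matching up witnessing interleavings via Proposition~\ref{prop:monotonicity}) that appear in the proof of Theorem~\ref{thm:equality}. The handling of the $\omega_\Gamma = \infty$ case via the convention $\Omega_\infty = \top$ is also in line with the paper's preceding remark, so nothing is missing.
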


\begin{proof}
We invite the reader to find the very short proofs. Compare Theorem~\ref{thm:equality}.
\end{proof}

\begin{remark*}
Theorem~\ref{thm:adjoint-equality} and the preceding discussion explain, retroactively, why sublinear projections are a natural idea. Classical persistence uses interleaving distances based on the {linear} family $(\Omega_\eps: t \mapsto t + \eps)$ of translations on~$\R$. For subsets of a metric space, the analogous family of offset operations $(\Omega_\eps: A \mapsto A^\eps)$ is \emph{super}linear rather than linear: $(A^{\eps_1})^{\eps_2}$ is always contained in $A^{\eps_1 + \eps_2}$ but need not be equal to it. Thus we consider superlinear families.
Sublinear projections emerge as the dual concept to superlinear families.
\end{remark*}

The language of monoidal categories and adjunctions gives access to standard machinery for constructing $\omega$ from~$\Omega$ and vice versa.
Most immediately, the Adjoint Functor Theorem for preorders and doctrinal adjunction~\cite{kelly:doctrinal-adjunction} tells us that 
if $\Omega$ is a superlinear family which preserves infima, then 
\[
\omega_{\Gamma} = \inf ( \eps \st \Gamma \leq \Omega_{\eps})
\]
defines a sublinear projection with $\omega \dashv \Omega$.
Dually,
if $\cat{Trans_P}$ is a complete poset (meaning that every subset has an infimum and, equivalently, that every subset has a supremum), and $\omega$ is a sublinear projection that preserves suprema, then
\[
  \Omega_{\eps} = \sup \left( \Gamma \mid \omega_{\Gamma} \leq \eps\right)
\]
defines a superlinear family with $\omega \dashv \Omega$.
Completeness of $\cat{Trans_P}$ is used in a mild way, to guarantee the existence of the supremum.
If these particular suprema can be shown to exist in some other way, then the arguments of Theorem~\ref{thm:equality} lead to the same result.

\subsection{Vector persistence}
\label{subsec:multi-d}

The theory of multidimensional persistence is of great practical importance but as yet there is no consensus on how it is best handled. This is for essential reasons: the module theory is wild~\cite{carlssonZomorodian:multidimP} so no choice is completely satisfactory. In this section we apply our framework to multidimensional persistence modules. There is a larger monoid of translations than with classical persistence. This leads to many natural metrics $\met_\vect{a}$ which can be retrieved from a more complicated metric-like gadget~$\D$.
We give a brief sketch of these ideas, our goal being to indicate the range of possibilities rather than give a definitive recommendation.
For much more specific detailed investigations see~\cite{Biasotti_C_F_G_L_2009,Lesnick_2011}.

Recall that our metric theory for generalized persistence modules works by comparing the monoid of translations $\cat{\Trans_P}$ with the monoids $[0,\infty)$ and $[0,\infty]$ in terms of maps of the form:
\[
\begin{diagram}
\node{[0,\infty)}
	\arrow{e,tb}{\Omega}{\mathrm{lax}}
\node{\cat{\Trans_P}}
	\arrow{e,tb}{\omega}{\mathrm{oplax}}
\node{[0, \infty]}
\end{diagram}
\]
As we have seen, a map of either type will suffice, and a pair of maps satisfying the adjunction relation will give rise to the same metric.

Let us apply similar thinking to modules over the preordered set $\cat{R}^n = (\R^n, \leq)$, arising in multidimensional persistence~\cite{carlssonZomorodian:multidimP,Lesnick_2011}.
Given two persistence modules $F, G: \cat{R}^n \to \cat{D}$, how are we to compare them?

If numerical distances are sought, here are two natural approaches. Select a nonnegative vector $\vect{a} = (a_1, \dots, a_n)$ and an arbitrary vector $\vect{b} = (b_1, \dots, b_n)$.

\begin{vlist}
\item
Restrict $F, G$ to a line. Formally, let $L: \cat{R} \to \cat{R}^n$ be the functor defined by $t \mapsto \vect{b} + t \vect{a}$ and, using the standard metric for persistence modules over~$\cat{R}$, define
\[
\partial_{\vect{a}, \vect{b}}(F,G) = \met(FL, GL).
\]
This approach discards information carried by $F,G$ away from the chosen line.

\item
Alternatively, define a superlinear (in fact, linear) family by $\Omega_\eps(\vect{x}) = \vect{x} + \eps \vect{a}$ and set
\[
\met_\vect{a}(F,G) = \metO(F,G).
\]
This uses the structure of $F, G$ over the entire space. The comparison is made using translations in a single direction specified by~$\vect{a}$.

\end{vlist}

\begin{example}[restating Example~\ref{ex:multi-d}]
Instead of saying that $HF, HG$ are $\eps$-interleaved we can say that $\met_\vect{1}(HF,HG) \leq \eps$, where $\vect{1} = (1, \dots, 1)$.
\end{example}

\begin{remark*}
The second approach above is related to a metric defined on multidimensional size functions in~\cite{Biasotti_C_F_G_L_2009}.
The formulas in that paper may be understood in terms of a sublinear projection, satisfying $\omega \dashv \Omega$, that is defined when $\vect{a}$ is strictly positive:
\[
\omega_\Gamma =
\sup_{\vect{x}, k}
\left[ \frac{ \left( \Gamma(\vect{x}) - \vect{x} \right)_k}{a_k} \right]
\]
\end{remark*}

\begin{proposition}
The metrics defined above are related by $\partial_{\vect{a},\vect{b}} \leq \met_\vect{a}$.
\end{proposition}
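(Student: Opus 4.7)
The plan is to show that any $\Omega_\eps$-interleaving of $F, G$ in $\cat{D}^{\cat{R}^n}$ feathers by $L$ to an $\Omega'_\eps$-interleaving of $FL, GL$ in $\cat{D}^{\cat{R}}$, where $\Omega'_\eps : t \mapsto t + \eps$ is the usual superlinear family on $\cat{R}$. Taking infima over $\eps$ then yields the desired inequality.

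The key observation underpinning the argument is the compatibility
\[
\Omega_\eps \circ L \;=\; L \circ \Omega'_\eps,
\]
which holds by direct computation: $\Omega_\eps(L(t)) = \vect{b} + t\vect{a} + \eps\vect{a} = L(t+\eps)$. Consequently, for any functor $H \in \cat{D}^{\cat{R}^n}$ we have $H \Omega_\eps L = HL\Omega'_\eps$ as functors $\cat{R} \to \cat{D}$.

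Given an $\Omega_\eps$-interleaving $(\varphi,\psi)$ of $F$ and $G$, with $\varphi : F \To G\Omega_\eps$ and $\psi : G \To F\Omega_\eps$, the first step is to feather by $L$ on the right to obtain natural transformations $\varphi L : FL \To GL\Omega'_\eps$ and $\psi L : GL \To FL\Omega'_\eps$. The interleaving identities $(\psi \Omega_\eps)\varphi = F\eta_{\Omega_\eps \Omega_\eps}$ and $(\varphi \Omega_\eps)\psi = G\eta_{\Omega_\eps \Omega_\eps}$ then feather, using the compatibility relation above and the fact that feathering respects vertical composition, to
\[
(\psi L \cdot \Omega'_\eps)(\varphi L) \;=\; FL \cdot \eta_{\Omega'_\eps \Omega'_\eps}, \qquad (\varphi L \cdot \Omega'_\eps)(\psi L) \;=\; GL \cdot \eta_{\Omega'_\eps \Omega'_\eps},
\]
where we have also used that $(\eta_{\Omega_\eps \Omega_\eps})L = L\eta_{\Omega'_\eps \Omega'_\eps}$, which is automatic from the Thin Lemma applied in $\cat{R}^n$. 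Hence $(\varphi L, \psi L)$ is an $\Omega'_\eps$-interleaving of $FL$ and $GL$.

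The conclusion follows by taking infima: if $F, G$ are $\Omega_\eps$-interleaved then $FL, GL$ are $\Omega'_\eps$-interleaved, so $\met(FL,GL) \leq \met_\vect{a}(F,G)$, i.e. $\partial_{\vect{a},\vect{b}}(F,G) \leq \met_\vect{a}(F,G)$. The only real obstacle is the bookkeeping of the feathering operations; conceptually, the statement is just the assertion that precomposition by $L$ defines a $1$-Lipschitz functor $L^* : \cat{D}^{\cat{R}^n} \to \cat{D}^{\cat{R}}$, the precomposition analogue of Theorem~\ref{thm:super-stability}, made possible by the intertwining $\Omega_\eps L = L\Omega'_\eps$ of the two superlinear families.
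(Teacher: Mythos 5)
Your proof is correct and takes essentially the same route as the paper: the paper's phrase ``the translations $(\Omega_\eps)$ restrict to the standard family on $\cat{R}$'' is exactly your intertwining identity $\Omega_\eps L = L\Omega'_\eps$, and ``the interleaving maps restrict'' is precisely feathering on the right by $L$, which you carry out in full detail (including the Thin Lemma check that the unit transformations match). The closing observation that precomposition by $L$ is a $1$-Lipschitz analogue of Theorem~\ref{thm:super-stability} is a nice conceptual summary, but the substance is the same.
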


\begin{proof}
The translations $(\Omega_\eps)$ on~$\cat{R}^n$ restrict to the standard family of translations on~$\cat{R}$, when $\cat{R}$ is identified with its copy $L(\cat{R}) \subset \cat{R}^n$.
It follows that if $F, G$ are $\Omega_\eps$-interleaved then the interleaving maps restrict to an $\eps$-interleaving of $FL, GL$.
\end{proof}

Another option is to express the relationship between two persistence modules $F,G$ as a more complicated object, namely as a subset of $[0,\infty)^n$. 
Either side of the diagram
\[
\begin{diagram}
\node{[0,\infty)^n}
	\arrow{e,tb}{\Omega}{\mathrm{lax}}
\node{\cat{\Trans}_{\cat{P}}}
	\arrow{e,tb}{\omega}{\mathrm{oplax}}
\node{[0, \infty]^n}
\end{diagram}
\]
will do.

\begin{definition}
Given $\omega: \cat{\Trans}_{\cat{P}} \to [0,\infty]^n$ satisfying $\omega_{\Gamma_1 \Gamma_2} \leq \omega_{\Gamma_1} + \omega_{\Gamma_2}$, we define:
\[
\D(F,G)
=
\left\{ \vect{e} \in [0,\infty)^n
\mid \text{$F,G$ are $(\Gamma, \Kappa)$-interleaved, for $\Gamma, \Kappa$ with $\omega_\Gamma, \omega_\Kappa \leq \vect{e}$} \right\}
\]
Alternatively, $\Omega: [0,\infty)^n \to \cat{\Trans}_\cat{P}$ satisfying $\Omega_{\vect{e}_1 + \vect{e}_2} \geq \Omega_{\vect{e}_1} \Omega_{\vect{e}_2}$, we define:
\[
\D(F,G)
=
\left\{  \vect{e} \in [0,\infty)^n
\mid \text{$F,G$ are $\Omega_\vect{e}$-interleaved} \right\}
\]
Each $\D(F,G)$ is an \emph{up-set} in the sense that if $\vect{e} \in \D(F,G)$ and $\vect{e} \leq \vect{e}'$ then $\vect{e}' \in \D(F,G)$. This is immediate for the $\omega$-definition and follows from monotonicity for the $\Omega$-definition.
\end{definition}

If both maps are given and $\omega \dashv \Omega$, then the two definitions lead to the same set. This follows from the argument used in the proof of Theorem~\ref{thm:equality}.

\begin{example}
The standard adjoint pair on~$\cat{R}^n$ is the family $(\Omega_\vect{e})$ defined by
\[
\Omega_\vect{e}(\vect{x}) = \vect{x} + \vect{e}
\]
for nonnegative $\vect{e} = (e_1, \dots, e_n)$, and the projection $\omega$ defined by
\[
\omega_\Gamma =
\vect{sup}_\vect{x} \left( \Gamma(\vect{x}) - \vect{x} \right)
\]
where the vector supremum is taken component-wise.
It is easy to see that
\[
\met_\vect{a}(F,G)
=
\inf \left( \eps \mid \eps \vect{a} \in \D(F,G) \right)
\]
with this choice.
\end{example}

We record the following properties of~$\D$, which follow easily from the elementary properties of interleaving as well as (for the second item) the sublinearity of~$\omega$ or superlinearity of~$\Omega$.

\begin{proposition}
For any lax monoidal functor $\Omega$ or oplax monoidal functor~$\omega$ as above, the following statements are valid for $E, F, G: \cat{P} \to \cat{D}$ and any functor $H$ with domain~$\cat{D}$.
\begin{vlist}

\item
$\D(HF,HG) \supseteq \D(F,G)$.

\item
$\D(E, G) \supseteq \D(E,F) + \D(F,G)$.
\end{vlist}
(The sum in the second item is to be understood in the sense of Minkowski.)
\qed
\end{proposition}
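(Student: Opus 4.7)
The plan is to prove both inclusions by unpacking the definition of $\D$ and invoking the three formal properties of interleavings established earlier: functoriality (Proposition~\ref{prop:functorial}), monotonicity (Proposition~\ref{prop:monotonicity}), and the triangle inequality (Proposition~\ref{prop:triangle}). Since $\D$ has two equivalent descriptions (via $\omega$ and via $\Omega$), I would handle each description in parallel; the reasoning mirrors the one-dimensional case used to prove Theorems~\ref{thm:inter-sub} and~\ref{thm:inter-super}.

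For the first statement, I would take $\vect{e} \in \D(F,G)$ and split into cases. In the $\omega$-picture there exist $\Gamma, \Kappa$ with $\omega_\Gamma, \omega_\Kappa \leq \vect{e}$ such that $F, G$ are $(\Gamma,\Kappa)$-interleaved; in the $\Omega$-picture, $F, G$ are $\Omega_{\vect{e}}$-interleaved. In either case, Proposition~\ref{prop:functorial} applied to $H$ transports the interleaving to $HF, HG$ with the same translations, so $\vect{e} \in \D(HF, HG)$.

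For the second statement, I would take $\vect{e}_1 \in \D(E,F)$ and $\vect{e}_2 \in \D(F,G)$ and show $\vect{e}_1 + \vect{e}_2 \in \D(E,G)$. In the $\omega$-picture, pick $(\Gamma_1, \Kappa_1)$ and $(\Gamma_2, \Kappa_2)$ realizing the two interleavings with $\omega$-values bounded by $\vect{e}_1$ and $\vect{e}_2$ respectively. Proposition~\ref{prop:triangle} produces a $(\Gamma_2 \Gamma_1, \Kappa_1 \Kappa_2)$-interleaving of $E$ and $G$, and sublinearity of $\omega$ gives
\[
\omega_{\Gamma_2 \Gamma_1} \leq \omega_{\Gamma_2} + \omega_{\Gamma_1} \leq \vect{e}_1 + \vect{e}_2,
\qquad
\omega_{\Kappa_1 \Kappa_2} \leq \vect{e}_1 + \vect{e}_2,
\]
placing $\vect{e}_1 + \vect{e}_2$ in $\D(E,G)$. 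In the $\Omega$-picture, Proposition~\ref{prop:triangle} gives an $(\Omega_{\vect{e}_2} \Omega_{\vect{e}_1}, \Omega_{\vect{e}_1}\Omega_{\vect{e}_2})$-interleaving of $E$ and $G$; superlinearity yields $\Omega_{\vect{e}_2}\Omega_{\vect{e}_1} \leq \Omega_{\vect{e}_1+\vect{e}_2}$ and similarly on the other side, and Proposition~\ref{prop:monotonicity} upgrades this to an $\Omega_{\vect{e}_1+\vect{e}_2}$-interleaving.

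There is no substantive obstacle here; everything is bookkeeping on top of the propositions already proved for scalar-valued $\omega$ and $\Omega$. The only point that deserves a line of explanation is that the componentwise order on $[0,\infty]^n$ is compatible with both addition and suprema, so the sublinear/superlinear estimates go through verbatim componentwise; this is the vectorial analogue of the step in Theorem~\ref{thm:inter-sub} where $\eps_1 + \eps_2$ is obtained by summing bounds.
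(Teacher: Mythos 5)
Your proof is correct and matches the route the paper itself indicates: the paper states the proposition without proof, noting it ``follows easily from the elementary properties of interleaving as well as (for the second item) the sublinearity of $\omega$ or superlinearity of $\Omega$,'' and you have carried out precisely that reasoning via Propositions~\ref{prop:functorial}, \ref{prop:monotonicity}, and~\ref{prop:triangle}, mirroring Theorems~\ref{thm:inter-sub} and~\ref{thm:inter-super} componentwise.
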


\begin{example}[refining Example~\ref{ex:multi-d}] Let $X$ be a topological space, let $\vect{f}, \vect{g}: X \to \R^n$, and let $F, G : \cat{R}^n \to \cat{Top}$ denote the corresponding multi-filtrations, defined using lower quadrants:
\[
F(\vect{a}) = \vect{f}^{-1}(Q^\vect{a}),
\quad
G(\vect{a}) = \vect{g}^{-1}(Q^\vect{a}).
\]
Define $\vect{e} = (\eps_1, \dots, \eps_k)$ where the $\eps_k = \norm{f_k-g_k}_\infty$ are component-wise norms. Then we have inclusions
\[
F(\vect{a}) \subseteq G(\vect{a} + \vect{e}),
\quad
G(\vect{a}) \subseteq F(\vect{a} + \vect{e})
\]
for all $\vect{a} \in \R^n$. It follows that $\vect{e}$ belongs to $\D(F,G)$ and hence to $\D(HF,HG)$, when these are defined using the standard adjoint pair.
This refines the original statement that $HF, HG$ are $\eps$-interleaved, since $\vect{e} \leq \eps \vect{1}$ when $\norm{\vect{f}-\vect{g}}_\infty \leq \eps$.
\end{example}

We finish Section~\ref{sec:monoids} by remarking that the $n=1$ case is special: an up-set of $[0,\infty)$ is almost completely determined by its infimum. By setting $\met(F,G) = \inf( \D(F,G))$ we return to the metric theory developed in earlier sections of this paper.

\section*{Closing Remarks}

Why category theory? The level of abstraction it provides has a number of advantages. It allows a uniform treatment of many flavors of persistence. We can give simpler common proofs to basic persistence results that have been proved individually in different settings.

Much of the standard theory can be developed for arbitrary functors into an arbitrary target category, provided that the indexing preordered set is equipped with 
a sublinear projection or superlinear family. 
In one direction, we can consider persistence for functions with values in an arbitrary metric space in place of the usual $\R$ or~$\R^n$. This expands the territory of topological data analysis.
In another direction, we can replace homology with other functors from algebraic topology, some of which may be of use in computations, such as rational homotopy or homology of the loop space. We can build merge trees~\cite{Morozov_B_W_2013} using the connected components functor~$\pi_0$, for instance.

The present work suggests a philosophy for developing persistence in new settings.
We expect theories of persistence to have two ingredients. The first ingredient is the construction of a generalized persistence module from data. Our methods suggest ways of doing this, metrics, and easy proofs of stability.
The second ingredient is a discrete invariant on the chosen class of modules. This is because generalized persistence modules typically carry a lot of information, and their interleaving metric may not be effectively computable. A good invariant will be easy to compute, and will have a metric that is easy to compute. The invariant will be stable with respect to variation in the module, but it will vary enough to be a useful discriminator.

This division into two stages is seen very clearly in classical persistence: from data one constructs a persistence module, and from the module one constructs the barcode or persistence diagram. Historically, the original algorithm of~\cite{Edelsbrunner_L_Z_2000} fused these two steps. Nowadays, the intermediate theoretical object, the persistence module, is indispensable for a thorough understanding of persistence~\cite{Zomorodian_Carlsson_2005}.

The two stages correspond also to the distinction between soft and hard stability theorems proposed in Section~\ref{subsec:stability}.
We believe it important to separate the aspects of the theory arising from general categorical considerations from the aspects that require arguments specific to the situation at hand.

\subsection*{Acknowledgements}

The first author gratefully acknowledges the support of AFOSR grant
FA9550-13-1-0115.  
The second author thanks his home institution, Pomona College, for a sabbatical leave of absence in 2013--14. The sabbatical was partially supported by the Simons Foundation (grant \#267571); and hosted by the Institute for Mathematics and its Applications, University of Minnesota, with funds provided by the National Science Foundation.


\end{document}